\newtheorem{theorem}{\sc Theorem}[section]
\newtheorem{lemma}[theorem]{\sc Lemma}
\newtheorem{remark}[theorem]{\sc Remark}
\begin{document}
\title{Supersolvable Frobenius groups with nilpotent centralizers}
\chead[Jhone Caldeira and Emerson de Melo]{Frobenius groups of automorphisms}

\author{Jhone Caldeira and Emerson de Melo}
\address{Instituto de Matem\'atica e Estat\'istica, Universidade Federal de Goi\'as,
Goi\^ania-GO, 74690-900 Brazil }
\email{jhone@ufg.br}
\address{Department of Mathematics, Universidade de Bras\'ilia, Bras\'ilia-DF, 70910-900 Brazil }
\email{emerson@mat.unb.br}

\keywords{Frobenius groups of automorphisms; fixed-point-free; nilpotency class}
\subjclass[2010]{Primary 20D45; secondary 17B70, 20D15, 20F40}

\begin{abstract}

Let $FH$ be a supersolvable Frobenius group with kernel $F$ and complement $H$. Suppose that a finite group $G$ admits $FH$ as a
group of automorphisms in such a manner that $C_G(F)=1$ and $C_{G}(H)$ is nilpotent of class $c$. 
We show that $G$ is nilpotent of $(c,\left|FH\right|)$-bounded class. 

\end{abstract}

\maketitle

\section{Introduction}

Let a group $A$ act by automorphisms on a group $G$. We denote by $C_G(A)$ the set 
$C_G(A)=\{x\in G ; \ x^a=x \ \textrm{for all} \ a\in A\}$, the centralizer of $A$ in $G$ 
(the fixed-point subgroup). Very often the structure of $C_G(A)$ has strong influence 
over the structure of $G$. Recently, prompted by Mazurov's problem 17.72 in the Kourovka 
Notebook \cite{KourNot}, 
some attention was given to the situation where a Frobenius group $FH$ acts by 
automorphisms on a finite group $G$. Recall that a Frobenius group $FH$ with kernel 
$F$ and complement $H$ can be characterized 
as a finite group that is a semidirect product of a normal subgroup $F$ by $H$ such that 
$C_F(h) =1$ for every $ h \in H \setminus \{1\}$. By Thompson's theorem \cite{Tho} the kernel $F$ 
is nilpotent, and by Higman's theorem \cite{Hig} the nilpotency class of $F$ 
is bounded in terms of the least prime divisor of $|H|$.

In the case where the kernel $F$ acts fixed-point-freely on $G$, 
some results on the structure of $G$ were obtained by Khukhro, Makarenko and Shumyatsky 
in \cite{KhuMakShu}. The authors prove that various properties of $G$ are in a certain 
sense close to the corresponding properties of its subgroup $C_G(H)$, possibly also 
depending on $H$. In particular, they proved the following result.

\begin{theorem}[\cite{KhuMakShu}, Theorem 2.7 (c)]\label{auxthe}
Suppose that a finite group $G$  admits a Frobenius group of automorphisms $FH$ with kernel $F$ 
and complement $H$ such that $C_G(F) = 1$ and $C_G(H)$ is nilpotent. Then $G$ is nilpotent.
\end{theorem}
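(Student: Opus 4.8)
The plan is to prove by induction on $|G|$ that $G$ is nilpotent, the engine being that the hypotheses pass to $FH$-invariant quotients. First I would record two structural facts available whenever $C_G(F)=1$. Since $[G,F]$ is $FH$-invariant and $F$ acts trivially on $G/[G,F]$, fixed-point-freeness forces $G=[G,F]$. Secondly, writing $H^{x}=x^{-1}Hx$ for $x\in F$, a direct computation shows $g\in C_G(H^{x})$ iff $g^{x^{-1}}\in C_G(H)$, so $C_G(H^{x})=C_G(H)^{x}$ is the image of $C_G(H)$ under the automorphism $x$; hence every $C_G(H^{x})$ is nilpotent of the same class $c$, and a covering lemma for fixed points gives $G=\langle C_G(H^{x}):x\in F\rangle$. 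Thus $G$ is generated by nilpotent subgroups of class $c$ that are permuted by $F$.

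The crucial preparatory step is to check that $FH$-invariant sections again satisfy the hypotheses. Let $N$ be a nontrivial $FH$-invariant normal subgroup. Then $C_N(F)=N\cap C_G(F)=1$, and it is precisely this fixed-point-freeness of $F$ on $N$ that substitutes for coprimality (which is \emph{not} assumed): the standard fixed-point lemmas for Frobenius groups of automorphisms then yield $C_{G/N}(F)=1$ together with the covering equality $C_{G/N}(H)=C_G(H)N/N$. The latter is a quotient of $C_G(H)$, hence nilpotent, so $G/N$ inherits both hypotheses and is nilpotent by induction. I expect the honest technical point here to be the non-coprime case of these fixed-point lemmas, where the Frobenius condition $C_F(h)=1$ on the complement, rather than mere coprimality, is what makes the argument go through.

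With the induction running, I would reduce to a single obstruction. If $G$ has two distinct minimal $FH$-invariant normal subgroups $N_1,N_2$, then $N_1\cap N_2=1$ and $G$ embeds into $G/N_1\times G/N_2$, a direct product of nilpotent groups, so $G$ is nilpotent. Hence I may assume $G$ has a unique minimal $FH$-invariant normal subgroup $N$, with $G/N$ nilpotent. Since $[N,G]$ is $FH$-invariant, normal in $G$, and contained in $N$, minimality gives $[N,G]=1$ or $[N,G]=N$. In the first case $N\le Z(G)$, and a central extension of the nilpotent group $G/N$ is again nilpotent, finishing the proof.

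The main obstacle is therefore to rule out $[N,G]=N$, that is, to show the monolith $N$ must be central. Here I would first eliminate a nonabelian $N$: a nonabelian chief factor carrying $C_N(F)=1$ and a nilpotent $C_N(H)=N\cap C_G(H)$ is incompatible with the known fixed-point structure of products of nonabelian simple groups, and once $N$ is abelian, $G$ (being abelian-by-nilpotent) is soluble. For abelian $N$, regarded as an $FH$-module on which $G/C_G(N)$ acts with $C_N(F)=0$, I would combine the covering $N=\sum_{x\in F}C_N(H^{x})$ with the nilpotency of $C_G(H)$ to produce, via a Thompson-type normal $p$-complement criterion, a nontrivial $FH$-invariant $p'$-subgroup; this would contain a minimal $FH$-invariant normal subgroup distinct from the $p$-group $N$, contradicting uniqueness unless $G$ is itself a $p$-group and hence nilpotent. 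Making this module-theoretic normal-complement argument precise, while controlling $H$- and $F$-fixed points without coprimality, is where the real work lies.
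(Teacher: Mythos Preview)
The paper gives no proof of this statement; it is quoted from \cite{KhuMakShu} as Theorem~2.7(c) and used only as input, so that $G$ is known to be nilpotent before one passes to the associated Lie ring. There is therefore no argument in this paper to compare your sketch against.

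On the sketch itself: your first three paragraphs are correct and standard. The two fixed-point facts you need to run the induction, $C_{G/N}(F)=1$ and $C_{G/N}(H)=C_G(H)N/N$, are precisely Lemmas~\ref{Ker} and~\ref{Com} of the present paper (also quoted from \cite{KhuMakShu}); they hold with no coprimeness hypothesis, so the assumptions do pass to $FH$-invariant quotients and you legitimately reduce to a unique minimal $FH$-invariant normal subgroup $N$ with $G/N$ nilpotent and $[N,G]=N$.

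The last paragraph, however, is a programme rather than a proof. Disposing of nonabelian $N$ is exactly the statement that $G$ is soluble, which is itself a substantial result---another clause of the same Theorem~2.7 in \cite{KhuMakShu}---and cannot be waved through as ``incompatible with the known fixed-point structure''. For abelian $N$ you invoke an unspecified ``Thompson-type normal $p$-complement criterion'' to manufacture an $FH$-invariant $p'$-subgroup, but you supply no mechanism linking nilpotency of $C_G(H)$ to any such criterion, and none of the classical normal-$p$-complement theorems applies directly in this configuration. You yourself concede that ``this is where the real work lies''; the difficulty is that this \emph{is} the content of the theorem---everything preceding it is routine reduction. In \cite{KhuMakShu} nilpotency is obtained as the Fitting-height-one case of a general bound equating the Fitting height of $G$ with that of $C_G(H)$. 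Your outline arrives at the right doorstep but does not enter.
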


Under the additional assumption that the Frobenius group of automorphisms $FH$ is metacyclic, 
that is, supposing that the kernel $F$ is cyclic,  
the authors use some Lie ring methods to obtain upper bounds for the nilpotency class of $G$. 
They proved that if $C_G(H)$ is nilpotent of class $c$, then the nilpotency class of $G$ is 
bounded in terms of $c$ and $|H|$. In the case when $GF$ is also a Frobenius group with kernel $G$ 
and complement $F$ (so that $GFH$ is a double Frobenius group) the latter result was obtained 
earlier in \cite{MakShu}. Examples in \cite{KhuMakShu} show that the result on the  
nilpotency class of $G$ is no longer true in the case of non-metacyclic Frobenius groups. 

Recall that a group $N$ is supersolvable if $N$ possesses a normal series with cyclic factors 
such that each term is normal in $N$. It is easy to see that a supersolvable group possesses a 
chief series whose factors have prime order.

Throughout the paper we use the expression ``$(a,b,\dots )$-bounded'' to mean 
``bounded from above in terms of  $a, b,\dots$ only''. In the present paper we consider the situation in which a not necessarily metacyclic Frobenius group acts by automorphisms on a finite group. More precisely, we prove the following theorem.

\begin{theorem}\label{mainthe1}
Let $FH$ be a supersolvable Frobenius group with kernel $F$ and complement $H$. 
Suppose that $FH$ acts on a finite group $G$ in such a way that $C_G(F)=1$ 
and $C_{G}(H)$ is nilpotent of class $c$. Then $G$ is nilpotent 
of $(c,\left|FH\right|)$-bounded class.
\end{theorem}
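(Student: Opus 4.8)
The plan is to invoke Theorem~\ref{auxthe} to reduce the problem to a bound on the nilpotency class, and then to use the supersolvability of $FH$ to strip the kernel down to cyclic pieces, for which the estimate of \cite{KhuMakShu} quoted above applies. First I would record the structural consequences of supersolvability of $FH$: the complement $H$ is abelian (indeed $(FH)'$ is nilpotent and equal to $FH'$, while $\gcd(|F|,|H'|)=1$ forces $[F,H']=1$, hence $H'=1$ as $F$ admits a Frobenius complement), and the nilpotent kernel $F$ has an $FH$-composition series $1=F_0<F_1<\dots<F_n=F$ with each $F_i$ normal in $FH$ and $|F_i/F_{i-1}|$ a prime; each $F_iH$ is again a supersolvable Frobenius group with kernel $F_i$. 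By Theorem~\ref{auxthe}, $G$ is nilpotent, so only its class is at issue; since the class of a finite nilpotent group is the maximum of the classes of its Sylow subgroups, each of which is characteristic in $G$, hence $FH$-invariant, and inherits the hypotheses ($C_P(F)=1$ and $C_P(H)\le C_G(H)$ nilpotent of class $\le c$), it suffices to treat the case where $G$ is a $p$-group.

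Assuming $G$ is a $p$-group, I would induct on $|F|$. If $F$ is cyclic, then $FH$ is metacyclic and the conclusion is exactly the theorem of \cite{KhuMakShu} stated in the introduction. Otherwise, I first reduce to coprime action: writing $F=F_p\times F_{p'}$, the subgroup $C_G(F_{p'})$ is $F$-invariant with $C_{C_G(F_{p'})}(F_p)=C_G(F)=1$, and since a nontrivial $p$-group cannot act fixed-point-freely on a nontrivial $p$-group, either $F_p=1$ or $C_G(F_{p'})=1$; in the latter case $F_{p'}H$ is a supersolvable Frobenius group with the same hypotheses and $|F_{p'}|<|F|$, so we are done by induction. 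Thus I may assume $F$ is a $p'$-group. Now supersolvability provides $F_1$, normal in $FH$, of prime order $q$ contained in $F$ (so $q\neq p$). The subgroup $C_G(F_1)$ is $FH$-invariant and $F_1$ acts trivially on it, so $FH/F_1\cong(F/F_1)H$ — a supersolvable Frobenius group with kernel $F/F_1$ of order less than $|F|$ — acts on $C_G(F_1)$ with $C_{C_G(F_1)}(F/F_1)=C_G(F)=1$ (coprimality giving $C_{F/F_1}(h)=C_F(h)F_1/F_1=1$) and $C_{C_G(F_1)}(H)\le C_G(H)$ nilpotent of class $\le c$. By the inductive hypothesis, $C_G(F_1)$ is nilpotent of $(c,|FH|)$-bounded class, say $c_1$.

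The crux is then to bound the class of $G$ in terms of $c$, $c_1$ and $|F_1H|$, knowing that the \emph{metacyclic} Frobenius group $F_1H$ acts on the $p$-group $G$ with $C_G(F_1)$ nilpotent of class $c_1$ — no longer trivial — and $C_G(H)$ nilpotent of class $c$. For this I would pass to the associated Lie ring $L=L(G)=\bigoplus_i\gamma_i(G)/\gamma_{i+1}(G)$, whose nilpotency class equals that of $G$ and which inherits the $FH$-action; by coprimality $C_L(F_1)$ is isomorphic to the associated Lie ring of $C_G(F_1)$, hence nilpotent of class $\le c_1$. Extending the ground ring so as to contain a primitive $q$-th root of unity, $L$ becomes graded by the cyclic group of order $q$ with identity component $L_0=C_L(F_1)$, and $H$ permutes the remaining homogeneous components in regular orbits, its action on the grading group being fixed-point-free. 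One then adapts the Higman-type argument underlying \cite{Hig} and the metacyclic Lie-ring estimate of \cite{KhuMakShu}: a sufficiently long Lie commutator either lands deep enough inside $L_0$ to be annihilated by the $c_1$-step nilpotency of $L_0$, or else vanishes because of the fixed-point-free action of $F_1H$ on the nonzero-degree components. This yields a $(c,c_1,|F_1H|)$-bound for the class of $L$, hence of $G$, and closes the induction.

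I expect the genuine obstacle to be precisely this last step — establishing the metacyclic statement with the centralizer of the kernel only assumed nilpotent of bounded class, which amounts to reworking the Lie-ring combinatorics of \cite{KhuMakShu} so as to tolerate a nonzero identity component $L_0$. By contrast, all the preceding reductions (to a $p$-group, to coprime action of $F$, and the passage from $F$ to $F/F_1$) are routine applications of Sylow theory and standard coprime-action lemmas.
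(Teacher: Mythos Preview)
Your reductions and inductive scheme match the paper's, but the final step as you state it --- bounding the class of $L$ using only the \emph{metacyclic} group $F_1H$, given that $L_0=C_L(F_1)$ is nilpotent of class $c_1$ and $C_L(H)$ is nilpotent of class $c$ --- is simply false in general. Over a field of characteristic $\ne 2,3$ take $|F_1|=3$, $|H|=2$, and $L=L_0\oplus L_1\oplus L_2$ with $L_0=\langle a,b\rangle$, $L_1=\langle x\rangle$, $L_2=\langle y\rangle$, the only nonzero brackets among generators being $[x,b]=x$ and $[y,b]=-y$; let $h$ act by $a\mapsto a$, $b\mapsto -b$, $x\leftrightarrow y$. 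Then $L_0$ and $C_L(H)=\langle a,\,x+y\rangle$ are both abelian, yet $[x,\underbrace{b,\dots,b}_n]=x$ for every $n$, so $L$ is not nilpotent. Thus relaxing ``$L_0=0$'' to ``$L_0$ nilpotent of bounded class'' in the metacyclic Lie-ring estimate of \cite{KhuMakShu} cannot work without additional input.

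What the paper exploits, and what your formulation discards, is the action of the \emph{full} kernel $F$ on $L$: since $C_L(F)=0$ one has $C_{L_0}(F/F_1)=0$, and applying Lemma~\ref{lemma dec Frb} to the Frobenius group $(F/F_1)H$ acting on $L_0$ gives the decomposition $L_0=\sum_{f\in F}V_f$ with $V_f=C_L(F_1H^{f})$ (Lemma~\ref{lemma dec L0}). Each summand sits inside $C_L(H^f)$, nilpotent of class $c$, and it is this --- with $f$ ranging over all of $F$, not just $F_1$ --- that drives the proof of $[L_b,\underbrace{L_0,\dots,L_0}_u]=0$ (Lemma~\ref{L_0}) and of the metabelian estimate (Lemma~\ref{metablemma}). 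Your plan is salvageable provided that at the last step you keep the entire $FH$-action on $L(G)$ rather than restricting to $F_1H$. (A minor aside: $C_{L(G)}(F_1)$ is not literally isomorphic to $L(C_G(F_1))$ --- coprimality only identifies it with the graded ring of $C_G(F_1)$ for the filtration $C_G(F_1)\cap\gamma_i(G)$ --- though your conclusion that it is nilpotent of class $\le c_1$ is still correct; the paper avoids this wrinkle by running the induction on $|FH|$ directly at the Lie-algebra level, in Theorem~\ref{Liealgthe}.)
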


Note that $G$ in Theorem \ref{mainthe1} is nilpotent by Theorem \ref{auxthe}. 
We wish to show that the nilpotency class of $G$ is $(c,\left|FH\right|)$-bounded. It should be mentioned that in the case of metacyclic $FH$, Khukhro, Makarenko and Shumyatsky in \cite[Theorem 5.1]{KhuMakShu} give a bound independent of the order $|F|$. At present it is unclear if in Theorem \ref{mainthe1} the bound can be made independent of $|F|$. The proof is based on an analogous result on Lie algebras.

In \cite{CalMelShu,KhuShu} we can find some other 
results bounding the nilpotency class for groups 
acted on by Frobenius groups of automorphisms with non-cyclic kernels.

\section{Some results on graded Lie algebras}\label{secgrLiealg}

Let $A$ be an additively written abelian group and $L$ an $A$-graded Lie algebra:

$$L=\bigoplus_{a\in A}L_a \ \textrm{, } \ [L_a,L_b]\subseteq L_{a+b}.$$

The particular case of $\mathbb{Z}/n\mathbb{Z}$-graded Lie algebras arises naturally in the study of Lie 
algebras admitting an automorphism $\varphi$ of order $n$. This is due to the fact that, 
after the ground field is extended by a primitive $n$th root of unity $\omega$, 
the eigenspaces $L_i=\{x \in L \ ; \  x^{\varphi}=\omega^ix\}$ behave like the 
components of a $\mathbb{Z}/n\mathbb{Z}$-grading. For example, the proof 
of a well-known theorem of Kreknin \cite{kre} stating that a Lie ring $L$ admitting a 
fixed-point-free automorphism of finite order $n$ is soluble with 
derived length bounded by a function of $n$, is reduced to proving 
the solvability of a $\mathbb{Z}/n\mathbb{Z}$-graded Lie ring with $L_0 = 0$. 

\begin{theorem}\label{solthe}
Let $n$ be a positive integer and $L$ be a $\mathbb{Z}/n\mathbb{Z}$-graded Lie algebra. 
If $L_0=0$, then $L$ is solvable of $n$-bounded derived length.
\end{theorem}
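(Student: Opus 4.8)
The plan is to prove this by induction on $n$. The key observation is that a $\mathbb{Z}/n\mathbb{Z}$-grading with $L_0 = 0$ means every graded component carries only "nonzero" degrees, and the bracket respects addition modulo $n$. First I would introduce the sum $L_0 \oplus L_1 \oplus \cdots \oplus L_{n-1} = L$, and for each divisor or each index consider the subalgebra generated by a single homogeneous component $L_i$ together with its "powers" $L_i, L_{2i}, \ldots$ — but the sharper route is the classical one via the following mechanism: consider the chain of ideals obtained by grouping components. Specifically, for a proper subgroup or subset $S \subseteq \mathbb{Z}/n\mathbb{Z}$ closed under addition, $\bigoplus_{a \in S} L_a$ is a subalgebra, and if $S$ is such that $0 \notin S + S$ in an appropriate sense one gets nilpotency; more usefully, one shows that the subalgebra $\widetilde{L}_i$ generated by $L_i$ alone (for $i \neq 0$) has a bounded derived length because within it the grading "collapses" onto the cyclic subgroup generated by $i$, which has order $n/\gcd(n,i) < n$ unless $i$ generates, and for generators $i$ one uses that $L_i + L_i + \cdots$ eventually cycles.

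The central technical step I would carry out is the following: let $L^{(1)} = [L,L]$ and iterate; I claim that $[L_i, L_i]$-type products, after boundedly many steps, land inside $L_0 = 0$. The cleanest formulation is Kreknin's original lemma: if we set $T_k$ to be the span of all commutators of weight involving components whose degrees sum to something divisible by $n$, these "sum-zero" commutators generate an ideal, and the point is to show that the whole algebra becomes solvable because any sufficiently long commutator in homogeneous elements has a consecutive sub-bracket whose degrees sum to $0 \bmod n$ (a pigeonhole on partial sums of the degree sequence: among $n$ partial sums mod $n$, two coincide, so a consecutive block sums to zero). Since $L_0 = 0$, such a sub-bracket vanishes, which kills all commutators of length $> n$ in homogeneous elements; this shows $L$ is nilpotent — wait, that is too strong and false in general, so the pigeonhole must be applied not to plain length but to the iterated derived series, tracking that each derived step forces the admissible degree-patterns into a strictly smaller structure. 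I would therefore organize the induction so that $L^{(k)}$ is graded by a quotient group or sits inside a sum over a proper coset-union, reducing $n$ at each stage, and the recursion $f(n) \le f(\text{divisor}) + O(1)$ closes the bound.

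The main obstacle, and the part requiring genuine care, is making the "degree bookkeeping" in the derived series precise: naive pigeonhole gives nilpotency of short commutators only when $L_0=0$ forbids the sum-zero sub-bracket, but a single bracket $[x,y]$ with $\deg x + \deg y \neq 0$ need not vanish, so one cannot directly conclude. The correct argument (Kreknin) introduces, for each nonempty proper subset, the corresponding homogeneous subalgebra and shows the derived subalgebra $[L,L]$ is contained in a sum of such proper pieces *after* accounting for all degrees, i.e. one proves by a combinatorial lemma on $\mathbb{Z}/n\mathbb{Z}$ that iterated brackets of depth roughly $2^n$ must contain a vanishing sum-zero sub-bracket. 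I expect to invoke precisely this combinatorial lemma — that in any sequence of $2^n$ elements of $\mathbb{Z}/n\mathbb{Z}$ there is a consecutive subsequence summing to zero modulo $n$ in the "bracket-tree" sense — as the heart of the proof, yielding the $n$-bounded derived length $f(n) \le 2^{2^n}$ or the sharper known value, and then conclude via $L_0 = 0$ that all such sub-brackets vanish, hence $L^{(f(n))} = 0$.
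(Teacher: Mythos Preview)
The paper does not prove this theorem; it is stated as Kreknin's result with a reference to \cite{kre}, so there is no in-paper argument to compare against and your plan has to stand on its own.

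It does not. You correctly abandon the naive partial-sum pigeonhole (it would yield nilpotency, which is false for $L_0=0$ in general), but the replacement you propose --- a ``bracket-tree'' combinatorial lemma asserting that every sufficiently deep derived-series commutator contains a sub-bracket of total degree $0 \bmod n$ --- is also false. Take $n=3$ and a balanced binary commutator with all $2^k$ leaves in $L_1$: the internal node at height $j$ carries degree $2^{j}\bmod 3\in\{1,2\}$, never $0$, regardless of $k$. So no sub-bracket is forced into $L_0$, and the lemma you ``expect to invoke'' already fails for $n=3$. The same example defeats your divisor-induction idea: the subalgebra generated by $L_1$ is graded by all of $\mathbb{Z}/3\mathbb{Z}$, not by a proper quotient, so the induction does not move.

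Kreknin's actual argument is not a pigeonhole on degrees. One orders the nonzero residues as $1,2,\ldots,n-1$, sets $L_{(s)}=\sum_{j=s}^{n-1}L_j$, and proves by induction on $s$ that an explicit $n$-bounded term of the derived series of $L$ lies in the ideal generated by $L_{(s)}$ together with $L_0$; at $s=n$ this ideal reduces to the one generated by $L_0$, which is zero under the hypothesis. The inductive step is a direct computation with $[L_i,L_j]\subseteq L_{i+j}$, exploiting the asymmetry of the chosen ordering, not a counting argument. None of your three sketches isolates this mechanism, so the plan as written has a genuine gap.
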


In the special case where $n$ is a prime we have the following well-known result proved by Higman.

\begin{theorem}\cite{Hig}\label{nilthe}
Let $p$ be a prime number and $L$ be a $\mathbb{Z}/p\mathbb{Z}$-graded Lie algebra. 
If  $L_0=0$, then $L$ is nilpotent of $p$-bounded class.
\end{theorem}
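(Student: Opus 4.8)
The plan is to prove the statement directly, by exhibiting a function $N=N(p)$ with $\gamma_N(L)=0$, which is equivalent to saying that $L$ is nilpotent of class less than $N$. I would first reduce to a statement about individual commutators. Since $L=\bigoplus_i L_i$ is spanned by its homogeneous components and $L_0=0$, the term $\gamma_N(L)$ is spanned, by multilinearity of the bracket, by left-normed commutators $[x_1,x_2,\dots,x_N]$ in which each $x_s$ is homogeneous, say $x_s\in L_{i_s}$ with $i_s\not\equiv 0\pmod p$. So it suffices to find $N(p)$ such that every such commutator of length $N(p)$ vanishes.

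Two elementary observations drive the argument. First, $[x_1,\dots,x_m]$ lies in $L_{i_1+\cdots+i_m}$, so it is automatically zero as soon as $i_1+\cdots+i_m\equiv 0\pmod p$; more generally, if any sub-bracket (any sub-expression of the bracketing) of homogeneous elements has weights summing to $0$ modulo $p$, the whole commutator is zero, because that sub-bracket already lies in $L_0=0$. Second, applied to the initial segments of a left-normed commutator: if some partial sum $s_t=i_1+\cdots+i_t$ is $\equiv 0$, then $[x_1,\dots,x_m]=0$. Hence we may assume $s_1,\dots,s_m$ are all nonzero residues; as there are only $p-1$ of these and $p$ is prime, once $m\ge p$ two of them coincide, say $s_a=s_b$ with $a<b$, which means the interior block $x_{a+1},\dots,x_b$ has weights summing to $0\pmod p$.

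The heart of the proof — and the step I expect to be the main obstacle — is to turn this interior block into an actual sub-bracket. The block $x_{a+1},\dots,x_b$ is not a sub-expression of the left-normed commutator, and simple examples (already with three factors) show that one cannot in general rewrite a left-normed commutator purely in terms of commutators with a prescribed contiguous block collected together. What one can do is run an induction on a carefully chosen well-founded complexity measure attached to the sequence $(i_1,\dots,i_m)$: the Jacobi identity lets one replace $[x_1,\dots,x_t,x_{t+1},\dots,x_m]$ by $[x_1,\dots,x_{t+1},x_t,\dots,x_m]$ plus a left-normed commutator of shorter length in the contracted sequence $(i_1,\dots,i_{t-1},i_t+i_{t+1},i_{t+2},\dots,i_m)$, and, by the first observation, the correction term vanishes outright whenever $i_t+i_{t+1}\equiv 0$. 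Iterating such moves in combination with the pigeonhole phenomenon above — applied not once but repeatedly, to manufacture many disjoint zero-sum blocks — one shows that a sufficiently long left-normed commutator can always be written as a $\mathbb{Z}$-linear combination of commutators each of which contains a homogeneous sub-bracket of weight $0$, hence is zero. Making this bookkeeping precise is delicate, and the resulting $N(p)$ is large (Higman's function), but it depends only on $p$.

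An alternative route, which leans on results already available in this section, is to bootstrap from Theorem \ref{solthe}: that theorem gives that $L$ is solvable of $p$-bounded derived length $d$, and one then argues by induction on $d$. The derived subalgebra $M=[L,L]$ is again $\mathbb{Z}/p\mathbb{Z}$-graded with $M_0\subseteq L_0=0$ and has smaller derived length, hence is nilpotent of $p$-bounded class by induction; it then remains to show that a $\mathbb{Z}/p\mathbb{Z}$-graded Lie algebra with trivial zero-component which is (nilpotent of bounded class)-by-abelian is nilpotent of bounded class — equivalently, that $\gamma_N(L)\subseteq\gamma_j(M)$ for $N$ large in terms of $p$ and $j$, and then one takes $j$ to be one more than the class of $M$. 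This last implication is proved by the same grading-plus-Jacobi manipulation, so the essential difficulty is unchanged; the gain is only organizational. Either way, the one genuinely nontrivial ingredient beyond soft commutator calculus is the interaction between the pigeonhole structure of $\mathbb{Z}/p\mathbb{Z}$ and Jacobi-based rewriting of long commutators.
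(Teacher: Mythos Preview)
The paper does not prove this statement; it is quoted as Higman's classical result with a citation to \cite{Hig} and no argument is supplied. There is therefore no proof in the paper to compare your proposal against.

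For what it is worth, your outline is the standard route (in the graded reformulation due to Kreknin and Kostrikin): reduce to left-normed homogeneous commutators, use pigeonhole on the partial sums $s_t=i_1+\cdots+i_t$ to locate zero-sum blocks, and use the Jacobi identity to rewrite. You correctly identify the one genuine difficulty, namely that a zero-sum \emph{segment} $x_{a+1},\dots,x_b$ is not a sub-bracket of the left-normed word, and that the Jacobi swap $[\dots,x_t,x_{t+1},\dots]=[\dots,x_{t+1},x_t,\dots]+[\dots,[x_t,x_{t+1}],\dots]$ must be iterated under a suitable well-founded induction. Your alternative route via Theorem~\ref{solthe} and induction on derived length is also standard and, as you say, does not bypass the combinatorial core: it merely reduces to the metabelian case, which is where the real work lies. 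So the plan is sound and matches the known proof; just be aware that the ``delicate bookkeeping'' you flag is the entire content of the theorem, not a routine detail to be filled in later.
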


If $J, Y,J_1,\ldots, J_s$ are subsets of $L$ we use $[J,Y]$ to denote the subspace 
of $L$ spanned by the set $\{[j,y] \ ; \ j\in J,y\in Y\}$ and for $i \geq 2$ we write 
$[J_1,\ldots,J_i]$ for $[[J_1,\ldots,J_{i-1}],J_i]$.

The next two results are also criteria for solvability and nilpotency of graded Lie algebra, respectively.

\begin{theorem}\cite[Theorem 1]{Khu}\label{Khuthe} Let $n$ be a positive integer 
and $L$ be a $\mathbb{Z}/n\mathbb{Z}$-graded Lie algebra. Suppose that 
$[L,\underbrace{L_0,\ldots,L_0}_m]=0$. Then, $L$ is solvable of $(n,m)$-bounded derived length.
\end{theorem}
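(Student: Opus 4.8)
The statement above generalizes Kreknin's theorem (Theorem \ref{solthe}), which is the case $L_0=0$, so the natural plan is to bootstrap from Theorem \ref{solthe}. \emph{First reduction.} Let $I$ be the ideal of $L$ generated by $L_0$; since $L_0$ is homogeneous, $I$ is a graded ideal, and the quotient $L/I$ is a $\mathbb{Z}/n\mathbb{Z}$-graded Lie algebra with $(L/I)_0=0$. By Theorem \ref{solthe} it is solvable of $n$-bounded derived length $u=u(n)$, so $L^{(u)}\subseteq I$, and it suffices to bound the derived length of $I$ in terms of $n$ and $m$.

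\emph{Induction on $m$.} When $m=1$ the hypothesis says $[L,L_0]=0$, so $L_0$ is central and $I=L_0$; then $L^{(u)}\subseteq L_0\subseteq Z(L)$ forces $L^{(u+1)}=0$, which is an $(n,1)$-bounded bound. For $m>1$ put $V=[L,\underbrace{L_0,\ldots,L_0}_{m-1}]$, a graded subspace with $[V,L_0]=0$ by hypothesis. In $\overline{L}=L/\mathrm{id}_L(V)$ we have $[\overline{L},\underbrace{\overline{L}_0,\ldots,\overline{L}_0}_{m-1}]=0$, so the inductive hypothesis bounds the derived length of $\overline{L}$ in terms of $n$ and $m-1$; hence $L^{(g(n,m-1))}\subseteq\mathrm{id}_L(V)$ for a suitable function $g$. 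It therefore remains to bound the derived length of the ideal $J:=\mathrm{id}_L(V)$ generated by a graded subspace $V$ that is centralized by $L_0$.

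\emph{The crux.} Here one must exploit the condition $[V,L_0]=0$ together with the $\mathbb{Z}/n\mathbb{Z}$-grading, via a refinement of the commutator calculus behind Theorem \ref{solthe}: a left-normed commutator of homogeneous elements that lies far down the derived series of $J$, once its length exceeds a bound depending only on $n$, can be rewritten by Jacobi identities in terms of commutators having an initial segment whose indices sum to $0$ in $\mathbb{Z}/n\mathbb{Z}$ (pigeonhole on partial sums); such segments are $L_0$-related, and because $V$, and hence $J$, meets $L_0$ trivially in the relevant positions, reinserting them collapses the derived series and yields an $n$-bounded bound for the derived length of $J$. Assembling the three steps, the derived length of $L$ is at most $u(n)+g(n,m-1)+\bigl(\text{the bound for }J\bigr)$, which is $(n,m)$-bounded. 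I expect this last step to be the main obstacle: the subspaces $[L,\underbrace{L_0,\ldots,L_0}_{k}]$ are not ideals, so the Kreknin-type reduction has to be organized around the ideals they generate, and one must track simultaneously the index bookkeeping of the grading and the number of $L_0$-insertions.
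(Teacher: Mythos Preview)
The paper does not prove Theorem~\ref{Khuthe}; it is quoted from \cite{Khu} and used as a black box, so there is no in-paper argument to compare against. I assess your attempt on its own merits.

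Your first reduction (Kreknin gives $L^{(u)}\subseteq \mathrm{id}_L(L_0)$) and the base case $m=1$ are fine, and the inductive scaffold is reasonable: with $V=[L,\underbrace{L_0,\dots,L_0}_{m-1}]$ and $J=\mathrm{id}_L(V)$, the quotient $L/J$ satisfies the hypothesis with $m-1$ in place of $m$, so everything does reduce to bounding the derived length of $J$.

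The genuine gap is exactly your ``crux.'' You claim that Kreknin-type rewriting together with $[V,L_0]=0$ forces long commutators in $J$ to vanish because ``$V$, and hence $J$, meets $L_0$ trivially in the relevant positions.'' This is not justified, and the stated reason is false: $J$ is spanned by left-normed commutators $[v,x_1,\dots,x_s]$ with $v\in V$ and homogeneous $x_i\in L$, and such a commutator lies in $L_0$ whenever the degrees sum to $0$ in $\mathbb{Z}/n\mathbb{Z}$, so $J_0=J\cap L_0$ can certainly be nonzero. The only hypothesis $J$ inherits is $[J,\underbrace{L_0,\dots,L_0}_{m}]=0$ with the \emph{same} $m$ (since $J\subseteq L$), so neither your induction on $m$ nor a direct appeal to Theorem~\ref{solthe} (which would need $J_0=0$) applies to $J$. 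Your pigeonhole-on-partial-sums produces a subcommutator lying in $L_0$, but to exploit $[V,L_0]=0$ that $L_0$-piece would have to appear immediately to the right of an element of $V$ after the Jacobi rearrangements, and nothing in your sketch arranges this; the correction terms generated by the Jacobi identity destroy the desired adjacency. In short, bounding the derived length of $J$ is essentially the full content of the theorem, and your outline stops precisely where the real work begins; you will need to consult \cite{Khu} for the combinatorial mechanism that makes the induction close.
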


\begin{theorem}\cite[Proposition 2]{KhuShu}\label{Grad2}
Let $A$ be an additively written abelian group and $L$ an $A$-graded Lie algebra. 
Suppose that there are
at most $d$ nontrivial grading components among the $L_a$. If  
$[L,\underbrace{L_a,\ldots,L_a}_m]=0$ for all $a\in A$, then $L$ is 
nilpotent of $(d,m)$-bounded class.
\end{theorem}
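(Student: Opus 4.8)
The plan is to bound the nilpotency class by bounding the length of nonzero left-normed commutators in homogeneous elements. Since $L=\bigoplus_{a}L_a$, every term $\gamma_k(L)$ of the lower central series is spanned by left-normed commutators $[x_1,\dots,x_k]$ with each $x_i$ homogeneous, so it suffices to produce a $(d,m)$-bounded integer $N$ such that $[x_1,\dots,x_k]=0$ whenever $k\ge N$ and the $x_i$ are homogeneous. First I would replace $A$ by the subgroup generated by the support $S=\{a:L_a\neq 0\}$; this changes none of the hypotheses and the only feature that matters is $|S|\le d$. The decisive structural observation is that if $c=[x_1,\dots,x_k]\neq 0$ then every initial segment $[x_1,\dots,x_j]$ is a nonzero homogeneous element, so its weight $s_j=w_1+\cdots+w_j$ (where $w_i$ denotes the weight of $x_i$) must lie in $S$. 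Thus a nonzero commutator produces a walk $s_1,\dots,s_k$ confined to the $\le d$-element set $S$, all of whose increments $w_i=s_i-s_{i-1}$ also lie in $S$.

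The combinatorial core is a Higman--Kreknin-style reordering argument built on the hypothesis $[L,\underbrace{L_a,\dots,L_a}_m]=0$, which says exactly that any product of $m$ maps $\mathrm{ad}\,y$ with $y$ ranging over a single component $L_a$ annihilates $L$. By the pigeonhole principle, as soon as $k>(m-1)d$ some weight $a\in S$ occurs at least $m$ times among the increments $w_1,\dots,w_k$; equivalently at least $m$ of the factors $x_i$ lie in one component $L_a$. If those $m$ factors could be made consecutive the hypothesis would force $c=0$, so I would transpose adjacent factors using the graded Jacobi identity in the form $[\dots,x_i,x_{i+1},\dots]=[\dots,x_{i+1},x_i,\dots]+[\dots,[x_i,x_{i+1}],\dots]$, the price being an error term in which the pair $x_i,x_{i+1}$ is replaced by the single homogeneous element $[x_i,x_{i+1}]$ of weight $w_i+w_{i+1}$.

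The hard part will be controlling these error terms: each transposition yields a commutator with one fewer factor, so a naive induction on length collapses because shorter commutators are not yet known to vanish. I would organize this by passing to the free $A$-graded Lie algebra on homogeneous generators of weights in $S$ and working modulo the ideal generated by the ad-nilpotency relations; in a fixed degree the multilinear component spanned by commutators using each generator once is finite-dimensional, and every error term produced by the reordering remains inside it, which makes the induction well-founded and forces $[x_1,\dots,x_k]$ into the relation ideal once $k$ exceeds a threshold depending only on $d$ and $m$. Two facts keep the bound clean: only $\le d$ weights ever occur along the walk, and any error term whose merged weight $w_i+w_{i+1}$ falls outside $S$ vanishes automatically. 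It is tempting to instead obtain solvability of $(d,m)$-bounded derived length from the method of Theorem~\ref{Khuthe} (with Theorem~\ref{nilthe} covering prime components) and then upgrade solvability to nilpotency by induction on the derived length via the abelian bottom layer; however, a sparse grading cannot be pushed down to a cyclic one with modulus bounded in terms of $d$, so the uniform bound on the class genuinely rests on the reordering argument rather than on a direct appeal to the cyclic criteria.
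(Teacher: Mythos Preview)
The paper does not give its own proof of this statement: Theorem~\ref{Grad2} is simply quoted from \cite[Proposition~2]{KhuShu} and used as a black box, so there is no in-paper argument to compare your proposal against.

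On its own merits, your outline points at the right mechanism (pigeonhole on the $\le d$ weights plus Jacobi reordering), and the two auxiliary observations you isolate---that every partial sum $s_j$ of a nonzero commutator lies in $S$, and that an error term with merged weight outside $S$ dies automatically---are exactly the facts one needs. The gap is where you yourself flag it: the error-term bookkeeping is asserted rather than carried out. Passing to the free $A$-graded Lie algebra and noting that the multilinear component in degree $k$ is finite-dimensional does not by itself produce a bound depending only on $d$ and $m$: that dimension is $(k-1)!$, so finite-dimensionality only tells you the rewriting process makes sense for each fixed $k$, not that the output lands in the relation ideal once $k$ exceeds some $N(d,m)$. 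Moreover, the free $A$-graded algebra on generators with weights in $S$ need not itself have support contained in $S$ (a bracket $[g_i,g_j]$ has weight $a_i+a_j$, which may fall outside $S$), so the ``universal'' object you propose does not obviously satisfy the $d$-component hypothesis; quotienting out the extra components just restates the theorem. What is missing is an explicit well-founded induction: for instance, one can collect one weight at a time and argue by induction on the number of entries (each error term has one fewer entry, and its new merged entry has weight back in $S$ or is zero), showing that any long left-normed commutator is a combination of commutators $[z_0,z_1,\dots,z_l]$ in which each weight occurs fewer than $m$ times among $z_1,\dots,z_l$, hence $l<md$; but one must then still explain why iterating this yields a fixed nilpotency bound rather than an infinite regress. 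Until that recursion is written down with a quantity that visibly decreases and is $(d,m)$-bounded at the start, the proposal remains a plan rather than a proof.
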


Now, let $p$ be a prime number and $L$ be a $\mathbb{Z}/p\mathbb{Z}$-graded 
Lie algebra. Assume that there exist non-negative integers $u$ and $v$ such that 
\begin{equation}\label{eq1}
[L,\underbrace{L_0,\ldots,L_0}_u]=0
\end{equation}
\noindent and
\begin{equation}\label{eq2}
[[L,L] \cap L_0,\underbrace{L_a,\ldots,L_a}_{v}]=0, \ \mbox{for all} \ a \in \mathbb{Z}/p\mathbb{Z}.
\end{equation}

We finish this section showing that conditions (\ref{eq1}) and (\ref{eq2}) together are 
sufficient to conclude that $L$ is nilpotent of $(p,u,v)$-bounded class. By Theorem 
\ref{Khuthe}, condition (\ref{eq1}) implies that $L$ is solvable with $(p,u)$-bounded 
derived length. Thus, we can use induction on the derived 
length of $L$. If $L$ is abelian, there is nothing to prove. Assume that $L$ is metabelian. 
In this case, $[x, y, z] = [x, z, y]$, for every $x \in [L, L]$ and $y,z \in L$.

For each $b \in \mathbb{Z}/p\mathbb{Z}$ we denote $[L,L] \cap L_b$ by $L_b'$. 
By Theorem \ref{Grad2} it is sufficient to prove that there exists a $(p,u,v)$-bounded number $t$ such that

$$[L_b',\underbrace{L_a,\ldots,L_a}_{t}]=0, \ \mbox{for all} \ a,b \in \mathbb{Z}/p\mathbb{Z}.$$

If $b=0$, this follows from (\ref{eq2}) with $t=v$. Suppose that $b \neq 0$.
If $a=0$, the commutator is zero from (\ref{eq1}) with $t=u$. In the case where $a \neq 0$ 
we can find a positive integer $s < p$ such that $b + sa = 0 \ (\textrm{mod }p)$. Therefore,
we have $[L_b',\underbrace{L_a,\ldots,L_a}_{v + s}] \subseteq [L_0',\underbrace{L_a,\ldots,L_a}_v]$. 
We know that the latter commutator is 0 by (\ref{eq2}). 

These previous ideas were applied in a similar way in \cite{Eme1} and \cite{Eme2}. 

\section{Bounding nilpotency class of Lie algebras}

In this section $FH$ denotes a finite supersolvable Frobenius group with kernel $F$ 
and complement $H$.

%
%
%

Let $R$ be an associative ring with unity. Assume that the characteristic 
of $R$ is coprime with $|F|$ and the additive group of $R$ is finite (or locally finite). 
Let $L$ be a Lie algebra over $R$. 
The main goal of this section is to prove the following theorem.

\begin{theorem}\label{Liealgthe}
Suppose that $FH$ acts by automorphisms on $L$ in such a way that 
$C_L(F)=0$ and $C_{L}(H)$ is nilpotent of class $c$. Then $L$ is nilpotent 
of $(c,\left|FH\right|)$-bounded class.
\end{theorem}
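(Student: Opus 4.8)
My plan is to use induction on the Fitting-type length of the supersolvable Frobenius group $FH$, peeling off one cyclic factor of the kernel $F$ at a time, and ultimately reduce everything to the case where $F$ is cyclic of prime order, where the graded Lie algebra machinery of Section \ref{secgrLiealg} applies directly. First I would recall that since $FH$ is supersolvable, the kernel $F$ has a chief series of $FH$ (refining into one-dimensional pieces) whose factors are cyclic of prime order and normal in $FH$; pick the bottom term $F_1 \trianglelefteq FH$ with $F_1$ of prime order $p$. Then $F_1 H$ is again a Frobenius group (with kernel $F_1$), and I would study the action of $F_1H$ on $L$ together with the induced action of the quotient $FH/F_1$ on a suitable quotient Lie algebra. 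The key point is that $C_L(F)=0$ forces $C_L(F_1)$ to be acted on fixed-point-freely by the remaining part of the kernel, while $C_L(H)$ controls the fixed points of the complement.

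The heart of the argument is the prime-order base case: $F = \langle\varphi\rangle$ is cyclic of order $p$. Extend the ground ring so that it contains a primitive $p$th root of unity (legitimate since the characteristic is coprime to $|F|$ and the additive group is finite/locally finite, so this is a routine extension-of-scalars step that does not affect nilpotency class); then $L$ decomposes as a $\mathbb{Z}/p\mathbb{Z}$-graded Lie algebra $L=\bigoplus_{i} L_i$ with respect to the eigenspaces of $\varphi$. The condition $C_L(F)=0$ says exactly $L_0=0$, so by Higman's Theorem \ref{nilthe}, $L$ is nilpotent of $p$-bounded class — but this is the wrong bound, since it does not involve $c$ at all, and more importantly it is only available once we are already in the prime-order cyclic case. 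The real work is to reincorporate $c$: I would show that $C_L(H)$, which after the grading is a $\varphi$-invariant subalgebra meeting each $L_i$, being nilpotent of class $c$, yields conditions of the shape \eqref{eq1} and \eqref{eq2}. Concretely, $H$ permutes the grading components $L_i$ (acting on the index set $\mathbb{Z}/p\mathbb{Z}$), and since $C_F(h)=1$ for $h\neq 1$ this permutation action is semiregular on the nonzero indices; fixed points of $H$ on $L$ then sit inside the "diagonal" built from one orbit, so nilpotency of $C_L(H)$ translates into a bound on iterated commutators $[L, L_a, \ldots, L_a]$ and $[[L,L]\cap L_0, L_a, \ldots, L_a]$ with $(c,|H|)$-bounded length. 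Having established \eqref{eq1} and \eqref{eq2}, the final paragraph of Section \ref{secgrLiealg} gives that $L$ is nilpotent of $(p, u, v)$-bounded, hence $(c,|FH|)$-bounded, class.

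For the inductive step with $F$ not of prime order, I would combine Theorem \ref{auxthe}-style fixed-point arguments with the graded-Lie-algebra criteria Theorem \ref{Khuthe} and Theorem \ref{Grad2}: the subalgebra $C_L(F_1)$ is invariant under $F/F_1 \rtimes H$, which is again a supersolvable Frobenius group of automorphisms acting with trivial centralizer of its kernel (because $C_L(F)=0$) and with $H$-centralizer inside the nilpotent $C_L(H)$, so by induction $C_L(F_1)$ is nilpotent of $(c,|FH|)$-bounded class; meanwhile the $\mathbb{Z}/p\mathbb{Z}$-grading from $F_1$ has $L_0 = C_L(F_1)$ now nilpotent rather than zero, which is precisely the hypothesis of Theorem \ref{Khuthe}, giving solvability of bounded derived length, and then one bootstraps to nilpotency by tracking the $H$-action on the grading as above. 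The main obstacle I anticipate is the bookkeeping in this last translation: one must show that the nilpotency of $C_L(H)$ survives as a usable commutator identity after extending scalars, passing to the grading, and quotienting, and that the $H$-orbits on the (possibly many) grading components produced along the chief series of $F$ can be controlled by a single $|FH|$-bounded parameter — this is where the supersolvability of $FH$, forcing all the relevant factors to be of prime order and normal, is essential, and where the argument could most easily go wrong if the chief factors were not normal in the whole group $FH$.
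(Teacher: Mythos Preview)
Your overall architecture---grade by a prime-order $FH$-normal subgroup $F_1\le Z(F)$, use induction on $|FH|$ to make $L_0=C_L(F_1)$ nilpotent of bounded class, and then invoke the Section~\ref{secgrLiealg} machinery via conditions \eqref{eq1} and \eqref{eq2}---is exactly the paper's plan. But there is a genuine gap in your inductive step. You write that ``$L_0=C_L(F_1)$ now nilpotent rather than zero, which is precisely the hypothesis of Theorem~\ref{Khuthe}''. It is not: Theorem~\ref{Khuthe} requires $[L,\underbrace{L_0,\ldots,L_0}_m]=0$, i.e.\ that $L_0$ act nilpotently on \emph{all} of $L$, which is strictly stronger than $L_0$ being a nilpotent subalgebra (an abelian $L_0$ acting faithfully on some $L_a$ already shows the difference). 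Closing this gap is precisely the content of Lemma~\ref{L_0}, and it is not a formality: one first decomposes $L_0=\sum_{f\in F}V_f$ with $V_f=C_L(F_1H^f)$, using Lemma~\ref{lemma dec Frb} for the Frobenius action of $(F/F_1)H$ on $L_0$ (this is where finiteness of the additive group enters); then the $H^f$-orbit sums give $[L_b,\underbrace{V_f,\ldots,V_f}_c]=0$ for each $f$ and each $b\neq 0$; and finally a secondary induction on the nilpotency class of $L_0$ (permuting the $V_f$'s modulo $[L_0,L_0]$ and using a pigeonhole on $|F|$) upgrades this to $[L_b,\underbrace{L_0,\ldots,L_0}_u]=0$ for a bounded $u$. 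None of this is visible in your outline.

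Two smaller points. Your base case $|F|=p$ is actually fine as is: $L_0=0$ and Higman's Theorem~\ref{nilthe} already gives a $p$-bounded, hence $(c,|FH|)$-bounded, class; there is no need to ``reincorporate $c$'' there. (The paper instead takes cyclic $F$ as the base case via \cite[Theorem~5.1]{KhuMakShu}, but your choice works too.) Second, your claim that nilpotency of $C_L(H)$ ``translates into a bound on $[L,L_a,\ldots,L_a]$'' for all $a$ is too optimistic; if it held you could apply Theorem~\ref{Grad2} directly and skip the solvable reduction entirely. What one can actually extract is only condition~\eqref{eq2}, with $[L,L]\cap L_0$ in the first slot, and the argument (Lemma~\ref{metablemma}) genuinely needs the metabelian hypothesis to permute the subspaces $T^{\varphi^j}$, together with the Vandermonde-type Lemma~\ref{auxlemma} to recover $L_a$ from the $H$-orbit sums.
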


If $F$ is cyclic, then 
$L$ is nilpotent of $(c,\left|H\right|)$-bounded class by \cite[Theorem 5.1]{KhuMakShu}. 
Therefore without loss of generality we may assume that $F$ is not a cyclic group.

Let $Z$ be a subgroup of prime order $p$ of $Z(F)$ such that $Z \triangleleft FH$ and let $\varphi$ be a generator of $Z$.

Now, let $\omega$ be a primitive $p$th root of unity. We extend the ground ring of $L$ by 
$\omega$ and denote by $\tilde{L}$ the algebra $L\otimes_{\mathbb{Z}}\mathbb{Z}[\omega]$. 
The action of $FH$ on $L$ extends naturally to $\tilde{L}$. Note that $C_{\tilde{L}}(F)=0$ and 
$C_{\tilde{L}}(H)$ is nilpotent of class $c$. Since $L$ and $\tilde{L}$ have the same nilpotency class, 
it is sufficient to bound the class of $\tilde{L}$. Hence, without loss of generality 
it will be assumed that the ground ring contains $\omega$ so that we will work with $L$ 
rather than $\tilde{L}$.

For each $i=0,\ldots,p-1$ we denote by $L_i$ the  $\varphi$-eigenspace corresponding 
to eigenvalue $\omega^i$, that is,  $L_i=\{x \in L \ ; \  x^{\varphi}=\omega^ix\}$. 
We have  $$L=\bigoplus_{i=0}^{p-1}L_i \ \textrm{ and } \ [L_i,L_j]\subseteq L_{i+j(\textrm{mod} \ p)}. $$ 

Note that  $L_0=C_L(Z)$. It is easy to see that $FH/Z$ acts on $L_0$ in such a manner 
that $C_{L_0}(F/Z)=0$. Thus, by induction on $|FH|$ we conclude that $L_0$ is 
nilpotent of $(c,|FH|)$-bounded class.

A proof of the next lemma can be found in \cite[Lemma 2.4]{KhuMakShu}. 
It will be useful to decompose the subalgebra $L_0$ into the fixed-points of the subgroups $H^f$, $f\in F$.

\begin{lemma}\label{lemma dec Frb}
Suppose that a finite group $N$ admits a Frobenius group of automorphisms $KB$ with 
kernel $K$ and complement $B$ such that $C_N(K)=1$. Then $N=\langle C_N(B)^y ; \ y \in K \rangle$.
\end{lemma}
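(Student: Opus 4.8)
The plan is to set $M=\langle C_N(B)^y : y\in K\rangle$ and prove $M=N$. First I would check that $M$ is a $KB$-invariant subgroup: since $K$ is normal in $KB$, conjugating a generator $C_N(B)^y$ by $k\in K$ produces $C_N(B)^{yk}$, while conjugating it by $b\in B$ produces $C_N(B)^{y^b}$ (using $C_N(B)^b=C_N(B)$), and $yk,\,y^b\in K$. Then one argues by induction on $|N|$, the case $N=1$ being trivial.

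The core of the argument is the case in which $N$ is elementary abelian, regarded as an $\mathbb{F}_q[KB]$-module $V$ with $C_V(K)=0$; here I would use the arithmetic of the Frobenius group $KB$. Recall that $KB$ is the disjoint union of $K$ with the $|K|$ pairwise-trivially-intersecting complements $B^y$, $y\in K$ (they are distinct because $B^{y_1}=B^{y_2}$ forces $y_1y_2^{-1}\in N_{KB}(B)\cap K=B\cap K=1$). For $v\in V$, regrouping $\sum_{g\in KB}v^g$ along this partition and using both $\sum_{y\in K}v^y\in C_V(K)=0$ and $\sum_{g\in KB}v^g\in C_V(KB)\subseteq C_V(K)=0$, one obtains
$$|K|\,v=\sum_{y\in K}\left(\sum_{b\in B^y}v^b\right).$$
Each inner sum lies in $C_V(B^y)=C_V(B)^y\subseteq M$, so $|K|\,v\in M$ for every $v$, and if $q$ does not divide $|K|$ this gives $V=M$. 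The residual possibility that $q$ divides $|K|$ I would handle by a secondary induction on $|KB|$: with $K_q:=O_q(K)$ (normal in $KB$, so $KB/K_q$ is again a Frobenius group with kernel $K/K_q$), the $KB$-invariant subgroup $C_N(K_q)$ is nontrivial whenever $q$ divides $|N|$, and the induced $KB/K_q$-action on it still has no $K/K_q$-fixed points.

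To reduce an arbitrary finite $N$ to this special case I would descend a $KB$-invariant chief series: a minimal $KB$-invariant normal subgroup $N_0$ satisfies $C_{N_0}(K)\le C_N(K)=1$, so the lemma holds for $N_0$ by induction and $N_0\le M$, after which one tries to lift the conclusion to $N/N_0$. I expect this reduction to be the principal obstacle, and the point at which the full Frobenius hypothesis is genuinely needed: in the non-coprime regime fixed-point subgroups need not survive passage to quotients, so one cannot simply assert $C_{N/N_0}(K)=1$ or $C_{N/N_0}(B)=\overline{C_N(B)}$, and the non-abelian chief factors require extra care. It is worth noting, however, that in the application made later in the paper the relevant $N$ is the \emph{abelian} additive group of $L_0$ and $K=F/Z$ acts coprimely on it (the characteristic of the ground ring being coprime to $|F|$), so there the whole argument collapses to: decompose $N$ into its primary components, reduce to an abelian $q$-group with $q$ not dividing $|K|$, and invoke the displayed identity.
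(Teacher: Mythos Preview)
The paper gives no proof of this lemma; it simply cites \cite[Lemma~2.4]{KhuMakShu}. So there is no in-paper argument to compare against, and the question is only whether your sketch stands on its own.

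Your averaging identity $|K|\,v=\sum_{y\in K}\bigl(\sum_{b\in B^{y}}v^{b}\bigr)$ is correct and is the standard device; it settles the elementary abelian $q$-case whenever $q\nmid|K|$, and---as you rightly observe at the end---this coprime abelian situation is all the present paper ever uses (in Lemma~\ref{lemma dec L0} the group $N$ is the finite additive group of $L_{0}$ and the characteristic is coprime to $|F|$). For the general statement, the lifting steps you hesitate over are in fact available, and the paper itself quotes them later from the same source: Lemma~\ref{Ker} gives $C_{N/N_{0}}(K)=1$ (using only that the Frobenius kernel $K$ is nilpotent), and Lemma~\ref{Com} gives $C_{N/N_{0}}(B)=C_{N}(B)N_{0}/N_{0}$, precisely the identity you were reluctant to assert; with these two facts your induction on $|N|$ closes up. Your residual elementary-abelian case $q\mid|K|$ also simplifies once you note that $|B|$ divides $|K|-1$ in any Frobenius group, so $\gcd(|K|,|B|)=1$; hence $q\nmid|B|$, and one may replace $K$ by its Hall $q'$-part $K_{q'}$ (still fixed-point-free on the $q$-group $N$ by the argument of Lemma~\ref{copr nuc}, and $K_{q'}B$ is still Frobenius), after which your displayed identity applies with $|K_{q'}|$ invertible. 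The only point your sketch genuinely leaves open is a possible non-abelian $KB$-chief factor of $N$; handling that requires the solvability of a finite group admitting a fixed-point-free nilpotent group of automorphisms, which is again part of the toolkit in \cite{KhuMakShu}.
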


In the above lemma we can write $N=\langle C_N(B^y) ; \ y \in K \rangle$, 
since $C_N(B)^y=C_N(B^y)$.

For any $f \in F$ we denote by $V_f$ the subalgebra $C_L(ZH^f)\leq L_0$. 
Note that $ZH^f$ is also a Frobenius group and that $ZH^f=ZH$ whenever $f \in Z$. 

\begin{lemma}\label{lemma dec L0}
We have $L_0 = \sum_{f\in F} V_f$.
\end{lemma}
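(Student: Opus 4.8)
The plan is to apply Lemma~\ref{lemma dec Frb} to the action of the Frobenius group $FH/Z$ on the subalgebra $L_0$, and then translate the resulting decomposition into the claimed sum of the subalgebras $V_f$.

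First I would set up the quotient action carefully. Since $Z \triangleleft FH$ and $Z \le Z(F)$, the quotient $FH/Z = (F/Z)(HZ/Z)$ is again a Frobenius group with kernel $F/Z$ and complement isomorphic to $H$ (the complement maps isomorphically since $Z$ lies in the kernel, and $F/Z$ still has the fixed-point-free property). This group acts on $L_0 = C_L(Z)$ because $Z$ acts trivially on $L_0$ by definition. Moreover $C_{L_0}(F/Z) = C_L(ZF) = C_L(F) = 0$, so the hypothesis $C_N(K)=1$ of Lemma~\ref{lemma dec Frb} is satisfied with $N = L_0$, $K = F/Z$, and $B = HZ/Z$.

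Next I would invoke Lemma~\ref{lemma dec Frb} (in the form stated in the remark following it, with $C_N(B^y)$) to conclude that
$$L_0 = \sum_{\bar f \in F/Z} C_{L_0}\bigl((HZ/Z)^{\bar f}\bigr),$$
where the sum over cosets can be replaced by a sum over coset representatives $f \in F$. The final step is to identify each summand with $V_f$. A fixed point of $(HZ/Z)^{\bar f}$ acting on $L_0$ is precisely an element of $L_0 = C_L(Z)$ fixed by $H^f$ (note $Z^f = Z$ since $Z$ is normal, and $Z$ already acts trivially on $L_0$), so $C_{L_0}((HZ/Z)^{\bar f}) = C_{L_0}(H^f) = C_L(Z) \cap C_L(H^f) = C_L(ZH^f) = V_f$. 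Hence $L_0 = \sum_{f \in F} V_f$ as claimed.

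The main point requiring a little care is the verification that $FH/Z$ is genuinely a Frobenius group with complement mapping isomorphically from $H$ and that all the centralizer identifications are exact (in particular that $C_L(ZH^f) = C_L(Z) \cap C_L(H^f)$ and that $Z$ acts trivially on $L_0$, which is immediate from $L_0 = C_L(Z)$). None of this is hard; the substance of the lemma is entirely contained in the already-cited Lemma~\ref{lemma dec Frb}, and the only real work is bookkeeping with the quotient action and the normality of $Z$ in $FH$.
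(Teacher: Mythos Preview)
Your proposal is correct and follows essentially the same route as the paper: apply Lemma~\ref{lemma dec Frb} to the Frobenius action of $FH/Z$ on $L_0$ and then identify the centralizers $C_{L_0}(H^{\bar f})$ with $V_f = C_L(ZH^f)$. The only point the paper makes explicit that you leave implicit is that the additive group of $L$ (hence of $L_0$) is finite, which is needed to invoke Lemma~\ref{lemma dec Frb} as stated; this is part of the standing hypotheses of the section, so no harm done.
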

\begin{proof}
The subalgebra $L_0$ is $FH$-invariant and so it admits the natural action by the group $FH/Z$. 
Moreover, $F/Z$ acts fixed-point-freely on $L_0$. Since the additive group of $L$ is finite, 
it is immediate from Lemma \ref{lemma dec Frb} that 
$L_0 =\langle C_{L_0}(H^{\tilde{f}}) ; \ \tilde{f} \in F/Z \rangle$. As a 
consequence we have that $L_0 =\langle C_{L}(ZH^f) ; \ f \in F \rangle$.
\end{proof}

It is clear that any conjugated $H^f$ of $H$ can be considered as a Frobenius 
complement of $FH$. Now we describe the action $H^f$ 
on the homogeneous components $L_i$.  Since $H$ is cyclic, we can choose 
a generator $h$ of $H$ and $r \in \{1,2,\ldots,p-1\}$ such that $\varphi^{h^{-1}} = \varphi^r$. 
Then $r$ is a primitive $q$th root of 1 in 
$\mathbb{Z}/p\mathbb{Z}$. The group $H$ permutes the homogeneous components $L_i$ as follows: 
$L_i^h = L_{ri}$ for all $i \in \mathbb{Z}/p\mathbb{Z}$. Indeed, if 
$x_i \in L_i$, then $(x_i^h)^\varphi = x_i^{h\varphi h^{-1}h} = (x_i^{\varphi^r})^h = 
\omega^{ri}x_i^h$. On the other hand, since $F$ commutes with $\varphi$, we also 
have $L_i^{h^f} = L_{ri}$ for all 
$i \in \mathbb{Z}/p\mathbb{Z}$ (because if $x_i \in L_i$, then 
$(x_i^{h^f})^\varphi = \omega^{ri}x_i^{h^f}$). Thus, the action of $H^f$ 
on the components $L_i$ coincides with the action of $H$. 

To lighten the notation we establish the following convention.

\begin{remark}[Index Convention]
In what follows, for a given $u_s \in L_s$ we denote both $u_s^{h^i}$ and 
$u_s^{(h^f)^i}$ by $u_{r^is}$, 
since $L_s^{h^i} = L_s^{(h^f)^i}=L_{r^is}$. Therefore, we can write 
$u_s + u_{rs} + \cdots + u_{r^{q-1}s}$ to mean a fixed-point of $H^f$ for any $f\in F$.
\end{remark}

\begin{lemma}\label{L_0}
There exists a $(c,|FH|)$-bounded number $u$ such that $[L,\underbrace{L_0,\ldots,L_0}_u]=0$.
\end{lemma}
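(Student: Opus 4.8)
The plan is to exploit the decomposition $L_0 = \sum_{f\in F} V_f$ from Lemma~\ref{lemma dec L0}, where $V_f = C_L(ZH^f) \le L_0$. First I would observe that each $V_f$ sits inside $C_L(H^f)$, which is a conjugate of $C_L(H)$ and hence nilpotent of class $c$; in particular $[V_f, \underbrace{L, \ldots, L}_{?}]$ is not immediately controlled, so instead I want to bound, for each fixed $f$, the weight in $L_0$ needed to annihilate $L$ against repeated brackets with $V_f$. The key point is that $V_f$, together with the grading coming from $\varphi$ (which is centralized by $f$, so the grading is $H^f$-equivariant via the same primitive root $r$), behaves like the $0$-component-with-extra-symmetry situation. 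Concretely, I would set up a $\mathbb{Z}/p\mathbb{Z}$-grading argument: since $V_f \subseteq L_0$ and $L_0 = C_L(Z)$, and since $C_L(F) = 0$, one controls how $V_f$ acts by passing through the Frobenius action of $H^f$ on the components $L_i$.

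The main technical step is to bound $[L, \underbrace{V_f, \ldots, V_f}_{m}]$ for a single $f$ by a $(c,|FH|)$-bounded number $m$. For this I would argue as follows: consider the subalgebra $M$ generated by $L$ acted on repeatedly by $V_f$; since $V_f \le C_L(H^f)$ and $C_L(H^f)$ is nilpotent of class $c$, the ``inner'' behaviour of $V_f$ is class-$c$-bounded. The Frobenius group $ZH^f$ acts on $L$ with $C_L(Z)=L_0 \supseteq V_f$ and with $C_L(ZH^f) = V_f$; applying the machinery of Section~\ref{secgrLiealg} — in particular the consequence of Theorems~\ref{Khuthe} and~\ref{Grad2} derived at the end of that section, which says that conditions~\eqref{eq1} and~\eqref{eq2} on a $\mathbb{Z}/p\mathbb{Z}$-graded Lie algebra force $(p,u,v)$-bounded nilpotency — one extracts a bound. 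The role of~\eqref{eq2} is played by the nilpotency of $C_L(H^f) \supseteq [L,L]\cap L_0$-type commutators against the homogeneous components, using that $s$-fold bracketing with $L_a$ ($a \ne 0$) shifts a $V_f$-element out of $L_0$ in at most $p-1$ steps and into a component where the Frobenius complement $H^f$ acts fixed-point-freely, so that the class-$c$ bound on $C_L(H^f)$ applies. This yields $[L, \underbrace{V_f, \ldots, V_f}_{m}] = 0$ with $m$ depending only on $c$, $p$, and $q = |H|$.

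Once a single $V_f$ is controlled, I would pass to the whole sum. Since $L_0 = \sum_{f \in F} V_f$ is a sum of (not necessarily boundedly many, but that is handled by the graded criterion, not by the number of summands) subalgebras each of which annihilates $L$ after $m$ repeated brackets, I would invoke a Hall–Higman-style or grading-free collection argument: a Lie algebra generated by $\varphi$-homogeneous pieces in which every generator annihilates $L$ after $m$ brackets, and which is itself solvable of bounded derived length (here $L_0$ is nilpotent of $(c,|FH|)$-bounded class by the induction on $|FH|$ already noted in the text), has $[L,\underbrace{L_0,\ldots,L_0}_{u}] = 0$ for a $(c,|FH|)$-bounded $u$. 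Formally this is the statement that if $N = \sum N_\lambda$ with each $[X,\underbrace{N_\lambda,\ldots,N_\lambda}_m]=0$ acting on a module $X$, and $N$ is nilpotent of bounded class, then $[X,\underbrace{N,\ldots,N}_u]=0$ for bounded $u$ — a routine induction on the nilpotency class of $N$, expanding each $N$-entry as a sum of $N_\lambda$-entries and using that brackets of $N_\lambda$-entries against $X$ telescope.

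The hard part will be the middle step: making precise how the Frobenius action of $ZH^f$ lets one feed $V_f = C_L(ZH^f)$ into the graded criterion of Section~\ref{secgrLiealg}. One must be careful that $\eqref{eq2}$ is about $[L,L]\cap L_0$ bracketed with the genuine homogeneous components $L_a$, whereas here the relevant ``$L_0$-like'' object is $V_f$, which is only a subalgebra of $L_0$; bridging this gap — presumably by first establishing the hypotheses of the end-of-Section~\ref{secgrLiealg} argument for the $ZH^f$-action directly on $L$, with $u$ and $v$ both $(c,|FH|)$-bounded (the bound on $v$ coming straight from nilpotency of $C_L(H^f)$, the bound on $u$ requiring the induction on $|FH|$ applied to $L_0$) — is where the real work lies.
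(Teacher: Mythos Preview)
Your overall architecture is right and matches the paper: decompose $L_0=\sum_{f\in F}V_f$, bound $[L_b,\underbrace{V_f,\ldots,V_f}_m]$ for each single $f$, then assemble via induction on the nilpotency class of an $FH$-invariant subalgebra $S\le L_0$ (using that $L_0$ itself has $(c,|FH|)$-bounded class). The assembly step is essentially what the paper does, with one correction: the number of distinct summands $V_f$ \emph{is} bounded, namely by $|F|$, and this bound is used --- the paper sets $l=(c-1)|F|+1$ so that by pigeonhole some $V_f$ occurs at least $c$ times among any $l$ entries, then rearranges modulo $[L_b,[S,S]]$. Your parenthetical ``not necessarily boundedly many'' is wrong and your ``routine induction'' would not go through without this bound.

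The real gap is in your treatment of the single-$V_f$ bound. You propose to obtain $[L,\underbrace{V_f,\ldots,V_f}_m]=0$ by feeding the $ZH^f$-action on $L$ into the Section~\ref{secgrLiealg} machinery. But the $\mathbb{Z}/p\mathbb{Z}$-grading comes from $\varphi\in Z$ and is the same regardless of which complement $H^f$ you choose, so the ``$L_0$'' appearing in conditions~\eqref{eq1}--\eqref{eq2} is still $C_L(Z)=L_0$, not $V_f$. Condition~\eqref{eq1} is then literally the statement of the lemma you are trying to prove; saying ``the bound on $u$ requiring the induction on $|FH|$ applied to $L_0$'' conflates the nilpotency of $L_0$ (which you do have) with $[L,\underbrace{L_0,\ldots,L_0}_u]=0$ (which you do not). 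Your last paragraph flags this gap but does not close it.

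The paper's argument for this step is direct and avoids Section~\ref{secgrLiealg} entirely. Fix $b\ne 0$ and $u_b\in L_b$. Under the Index Convention, $u_b+u_{rb}+\cdots+u_{r^{q-1}b}\in C_L(H^f)$. Since $V_f\subseteq C_L(H^f)$ and $C_L(H^f)$ is nilpotent of class $c$, bracketing this sum $c$ times with $V_f$ gives $0$. But $V_f\subseteq L_0$, so each term $[u_{r^ib},\underbrace{V_f,\ldots,V_f}_c]$ stays in its own component $L_{r^ib}$; as $b\ne 0$ and $r$ is a primitive $q$th root of $1$ modulo $p$, these components are pairwise distinct, and each term vanishes separately. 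Hence $[L_b,\underbrace{V_f,\ldots,V_f}_c]=0$ with $m=c$ on the nose.
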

\begin{proof} It suffices to prove that $[L_b,\underbrace{L_0,\ldots,L_0}_u]=0$ for any
$b \in \{0,1,\ldots, p-1\}$.

Taking into account that $L_0$ is nilpotent of $(c,|FH|)$-bounded class, we may assume $b \neq 0$.

By Lemma \ref{lemma dec L0}, 
$L_0 =  \sum_f V_f$. Let $u_b \in L_b$. Since $u_b + u_{rb} + \cdots + u_{r^{q-1}b} \in C_L(H^f)$ and 
$V_f \subseteq C_L(H^f)$, we have
$$[u_b + u_{rb} + \cdots + u_{r^{q-1}b}, \underbrace{V_f,\ldots,V_f}_c] = 0.$$
Thus, $\sum_{i=0}^{q-1} [u_{r^ib}, \underbrace{V_f,\ldots,V_f}_c] = 0$. 
On the other hand, $[u_{r^ib}, \underbrace{V_f,\ldots,V_f}_c] \in L_{r^ib}$ 
and $L_{r^ib}\neq L_{r^jb}$ whenever $i \neq j$. Therefore, we obtain $[L_b, \underbrace{V_f,\ldots,V_f}_c] = 0$.

Let $S$ be an $FH$-invariant subalgebra of $L_0$ and let $S_f = S \cap V_f$ for $f \in F$. 
It follows that $S = \sum_f S_f$. Now, we will show that there exists a $(c,|FH|)$-bounded 
number $u$ such that $[L_b,\underbrace{S,\ldots,S}_u]=0$ using induction on the 
nilpotency class of $S$. Since $[S,S]$ is nilpotent of smaller class, 
there exists a $(c,|FH|)$-bounded number $u_1$ such that $[L_b,\underbrace{[S,S],\ldots,[S,S]}_{u_1}]=0$. 

Now, set $l=(c-1)\left|F\right|+1$ and 
$W=[L_b,S_{i_1},\ldots,S_{i_l}]$ for some choice of $S_{i_j}$ in $\{S_{f} ; \ f\in F \}$. 
It is clear that for any permutation $\pi$ of the symbols $i_1,\ldots,i_l$ we have 
$W\leq [L_b,S_{\pi(i_1)},\ldots,S_{\pi(i_l)}]+[L_b,[S,S]]$. Also, note that the number $l$ 
is big enough to ensure that some $S_i$ occurs in the list $S_{i_1},\ldots,S_{i_l}$ 
at least $c$ times. Thus, we deduce that 
$W\leq [[L_b,\underbrace{S_i,\ldots,S_i}_{c}], *,\ldots,*]+[L_b,[S,S]]$, 
where the asterisks denote some of the subalgebras $S_j$ which, in view of the fact that 
$[L_b,\underbrace{S_i,\ldots,S_i}_{c}]=0$, are of no consequence. Hence, $W\leq [L_b,[S,S]]$.

Further, for any choice of $S_{i_1},\ldots,S_{i_l} \in \{S_{f} ; \ f \in F \}$ the same 
argument shows that $$[W,S_{i_1},\ldots,S_{i_l}]\leq [W,[S,S]]\leq [L_b,[S,S],[S,S]].$$ 
More generally, for any $m$ and any $S_{i_1},\ldots,S_{i_{ml}} \in \{S_f; \ f \in F\}$ 
we have $$[L_b,S_{i_1},\ldots,S_{i_{ml}}]\leq [L_b,\underbrace{[S,S],\dots,[S,S]}_m].$$

Put $u=u_1l$. The above shows that 
$$[L_b,S_{i_1},\ldots,S_{i_{u}}]\leq [L_b,\underbrace{[S,S],\dots,[S,S]}_{u_1}]=0.$$ 
Of course, this implies that $[L_b,\underbrace{S,\ldots,S}_u]=0$. 
The lemma is now straightforward from the case where $S=L_0$.
\end{proof}

\noindent\textbf{Proof of Theorem \ref{Liealgthe}} \\

In view of Lemma \ref{L_0}, applying the arguments of Section \ref{secgrLiealg}, 
Theorem \ref{Liealgthe} holds if we show the following:

\begin{lemma}\label{metablemma} Let $L$ be metabelian. There exists a 
$(c,\left|FH\right|)$-bounded number $v$ such that $[[L,L] \cap L_0,\underbrace{L_a,\ldots,L_a}_{v}]=0, 
\ \mbox{for all} \ a \in \mathbb{Z}/p\mathbb{Z}$.
\end{lemma}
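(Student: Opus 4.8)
The plan is to establish condition (\ref{eq2}) for metabelian $L$; combined with Lemma~\ref{L_0} (which is condition (\ref{eq1})), the argument of Section~\ref{secgrLiealg} then gives Theorem~\ref{Liealgthe}. First I would reduce to fixed points of the conjugates $H^{f}$. Set $M=[L,L]\cap L_{0}$; since $L$ is metabelian this is an abelian subalgebra, and it is $FH$-invariant because $[L,L]$ is $FH$-invariant and $L_{0}=C_{L}(Z)$ with $Z\triangleleft FH$. The subgroup $Z$ acts trivially on $L_{0}$, so $FH/Z$ acts on $M$ with $C_{M}(F/Z)\subseteq C_{L_{0}}(F/Z)=0$, i.e. $F/Z$ acts fixed-point-freely on $M$; by Lemma~\ref{lemma dec Frb}, exactly as in the proof of Lemma~\ref{lemma dec L0}, one gets $M=\sum_{f\in F}M_{f}$ with $M_{f}:=M\cap C_{L}(H^{f})=M\cap V_{f}$. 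Since $[M,\underbrace{L_{a},\ldots,L_{a}}_{v}]=\sum_{f\in F}[M_{f},\underbrace{L_{a},\ldots,L_{a}}_{v}]$, it is enough to produce a $(c,|FH|)$-bounded $v$ with $[M_{f},\underbrace{L_{a},\ldots,L_{a}}_{v}]=0$ for every $f\in F$ and every $a\in\mathbb{Z}/p\mathbb{Z}$; and for $a=0$ this is immediate from Lemma~\ref{L_0}, because $[M_{f},\underbrace{L_{0},\ldots,L_{0}}_{u}]\subseteq[L,\underbrace{L_{0},\ldots,L_{0}}_{u}]=0$.

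Next I would write down the relations coming from the nilpotency of $C_{L}(H^{f})$. Fix $f\in F$, $a\neq 0$, and take $w\in M_{f}$ and $z_{a}\in L_{a}$. Recall that $C_{L}(H^{f})=C_{L}(H)^{f}$ is nilpotent of class $c$ and that $\widehat{z}_{a}:=z_{a}+z_{ra}+\cdots+z_{r^{q-1}a}$ is a fixed point of $H^{f}$, where, following the Index Convention, $z_{r^{l}a}:=z_{a}^{(h^{f})^{l}}\in L_{r^{l}a}$. As $w$ and $\widehat{z}_{a}$ lie in $C_{L}(H^{f})$, we have $[w,\underbrace{\widehat{z}_{a},\ldots,\widehat{z}_{a}}_{n}]=0$ for every $n\geq c$. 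Expanding, and using that $L$ is metabelian so that the entries following $w\in[L,L]$ may be permuted freely, this becomes, for each $n\geq c$,
$$\sum_{m_{0}+\cdots+m_{q-1}=n}\binom{n}{m_{0},\ldots,m_{q-1}}\,\bigl[w,\underbrace{z_{a}}_{m_{0}},\underbrace{z_{ra}}_{m_{1}},\ldots,\underbrace{z_{r^{q-1}a}}_{m_{q-1}}\bigr]=0,$$
an identity in which the summand indexed by $(m_{0},\ldots,m_{q-1})$ is homogeneous of degree $(m_{0}+m_{1}r+\cdots+m_{q-1}r^{q-1})a$, so that each $\mathbb{Z}/p\mathbb{Z}$-homogeneous component of it vanishes separately. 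I would also record the companion fact used repeatedly in what follows: any commutator $[w,\xi_{1},\ldots,\xi_{N}]$ with $w\in M_{f}$ at least $u$ of whose entries $\xi_{i}$ lie in $L_{0}$ equals $0$ --- reorder those $u$ entries to the right (again by metabelianity, since $w\in[L,L]$) and apply Lemma~\ref{L_0}.

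Finally, from these relations I would extract the vanishing $[w,\underbrace{z_{a},\ldots,z_{a}}_{v}]=0$ for a $(c,|FH|)$-bounded $v$, by the same combinatorial induction as in the proof of Lemma~\ref{L_0} and in \cite{Eme1,Eme2}. Concretely, one organizes the commutators $[w,z_{r^{l_{1}}a},\ldots,z_{r^{l_{N}}a}]$ with $l_{i}\in\{0,\ldots,q-1\}$ according to their degree and to the multiset $\{l_{1},\ldots,l_{N}\}$ of shifts, and argues that once $N$ passes a threshold bounded in terms of $c$, $p$ and $q$ (hence $(c,|FH|)$-bounded), each such commutator vanishes: either some value $z_{r^{l}a}$ occurs at least $p$ times so that, after reordering, a prefix $[w,\underbrace{z_{r^{l}a},\ldots,z_{r^{l}a}}_{p}]$ returns to $[L,L]\cap L_{0}=M$ and one iterates on a shorter commutator --- eventually accumulating $u$ entries to which Lemma~\ref{L_0} applies; or some shift recurs often enough that a block of entries, together with $w$, assembles into a $(c+1)$-fold commutator of elements of $C_{L}(H^{f})$, which is $0$, the residual terms involving $\gamma_{2}(C_{L}(H^{f}))$-entries or $[M_{f},M_{g}]$-corrections being absorbed by an induction on the nilpotency class of $C_{L}(H^{f})$ (base case: $C_{L}(H^{f})$ abelian, where $[w,\widehat{z}_{a}]=0$ already gives $[M_{f},L_{a}]=0$). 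Taking $v$ larger than this threshold and than $u$ then proves the lemma. I expect the real difficulty to lie precisely here: the ``shifted'' entries $z_{r^{l}a}$ with $l\neq 0$ produced on expanding $\widehat{z}_{a}$ sit in the nonzero components $L_{r^{l}a}$, where Lemma~\ref{L_0} says nothing directly, so each must either be rotated back into $L_{0}$ through a full run of $p$ brackets with $L_{a}$ or be packed into a commutator inside $C_{L}(H^{f})$, and keeping the number of brackets this costs $(c,|FH|)$-bounded is the technical heart of the argument.
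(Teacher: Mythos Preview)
Your reduction to the pieces $M_{f}=[L,L]\cap V_{f}$ is correct and matches the paper, and the case $a=0$ via Lemma~\ref{L_0} is fine. The problem is your third paragraph: it is only a sketch, you yourself flag it as ``the real difficulty'', and as written it does not establish the vanishing. Moreover, the argument you outline concerns only commutators $[w,z_{r^{l_{1}}a},\ldots,z_{r^{l_{N}}a}]$ built from the $H^{f}$-orbit of a \emph{single} $z_{a}$; even if it worked, you would still owe a linearization step to get the full $[M_{f},\underbrace{L_{a},\ldots,L_{a}}_{v}]=0$.

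The paper bypasses the combinatorics with a short linear-algebra trick you are missing. Let $T$ be the span of all sums $x_{a}+x_{ra}+\cdots+x_{r^{q-1}a}$ with $x_{a}\in L_{a}$ (the $H^{f}$-fixed points in $\bigoplus_{i}L_{r^{i}a}$); as you note, $[M_{f},\underbrace{T,\ldots,T}_{c}]=0$. Now apply $\varphi^{j}$: since $M_{f}\subseteq L_{0}$ is $\varphi$-fixed, this gives $[M_{f},\underbrace{T^{\varphi^{j}},\ldots,T^{\varphi^{j}}}_{c}]=0$ for every $j$. A Vandermonde-type inversion (the paper quotes \cite[Lemma~5.3]{KhuMakShu}; here one uses that $p$ is invertible in the ground ring) shows that $L_{a}\subseteq\sum_{j=0}^{q-1}T^{\varphi^{j}}$. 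With $v=(c-1)q+1$, expand $[M_{f},\underbrace{L_{a},\ldots,L_{a}}_{v}]$ via this inclusion, permute the entries freely by metabelianity, and by pigeonhole at least $c$ copies of some fixed $T^{\varphi^{j_{0}}}$ can be moved to the front, giving $0$. This replaces your entire inductive scheme by a one-line pigeonhole argument; the key idea is that the $\varphi$-translates of the fixed-point subspace $T$ already span $L_{a}$.
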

\begin{proof}
Recall that $[L,L]\cap L_0 =  \sum_f [L,L]\cap V_f$ where $V_f=C_L(ZH^f)$. Set 
$V=C_L(ZH)$ and $V'=[L,L]\cap V$. First we prove that $[V',\underbrace{L_a,\ldots,L_a}_{v}]=0$ for any 
$a \in \mathbb{Z}/p\mathbb{Z}$.

For any $u_a \in L_a$ we have $u_a + u_{ra} + \cdots + u_{r^{q-1}a} \in C_L(H)$ (under Index Convention). Thus, 
$$[V',\underbrace{u_a + u_{ra} + \cdots + u_{r^{q-1}a},\ldots,v_a + v_{ra} + \cdots + v_{r^{q-1}a}}_c] = 0$$
for any $c$ elements $u_a,\ldots,v_a \in L_a$.

Let $T$ denote the span of all the sums $x_a + x_{ra} + \cdots + x_{r^{q-1}a}$ over $x_a \in L_a$ 
(in fact, $T$ is the fixed-point subspace of $H$ on $\bigoplus_{i=0}^{q-1}L_{r^ia}$). Then the 
latter equality means that
$$[V',\underbrace{T,\ldots,T}_c] = 0.$$

Applying $\varphi^j$ we also obtain
$$[V',\underbrace{T^{\varphi^j},\ldots,T^{\varphi^j}}_c] = 
[(V')^{\varphi^j},\underbrace{T^{\varphi^j},\ldots,T^{\varphi^j}}_c] = 0.$$

A Vandermonde-type linear algebra argument shows that $L_a \subseteq \sum_{j=0}^{q-1}T^{\varphi^j}$. 
Actually this fact is a consequence of the following result:

\begin{lemma}\cite[Lemma 5.3]{KhuMakShu}\label{auxlemma} Let $\left\langle \alpha\right\rangle$ 
be a cyclic group of order $n$, and $\omega$ a primitive nth root of unity. 
Suppose that $M$ is a $\mathbb{Z}[\omega]\left\langle\alpha\right\rangle$-module 
such that $M = \sum_{i=1}^{m} M_{t_i}$,
where $x\alpha = \omega^{t_i}x$ for $x \in M_{t_i}$ and 
$0 \leq t_1 < t_2 < \cdots < t_m < n$. 
If $z = y_{t_1} + y_{t_2} + \cdots + y_{t_m}$ for $y_{t_i} \in M_{t_i}$, 
then for some $m$-bounded number $d_0$ every element $n^{d_0}y_{t_s}$ is 
a $\mathbb{Z}[\omega]$-linear combination of the elements 
$z, z\alpha, \ldots, z\alpha^{m-1}$.
\end{lemma}

Now we can apply Lemma \ref{auxlemma} with $\alpha = \varphi$, 
$M = L_a + L_{ra} + \cdots + L_{r^{q-1}a}$ 
and $m = q$ to $w = u_a + u_{ra} + \cdots + u_{r^{q-1}a} \in T$ for any $u_a \in L_a$, 
because here the indices $r^ia$ can be regarded as pairwise distinct residues modulo 
$p$ ($r$ is a primitive $q$th root of 1 modulo $p$). Since $p$ is invertible in our 
ground ring, follows that $L_a \subseteq \sum_{j=0}^{q-1}T^{\varphi^j}$.

Set $v = (c-1)q + 1$. We now claim that $[V',\underbrace{L_a,\ldots,L_a}_v]=0$. Indeed, after replacing 
$L_a$ with $\sum_{j=0}^{q-1}T^{\varphi^j}$ and expanding the sums, in each commutator 
of the resulting linear combination we can freely permute the entries $T^{\varphi^j}$, since 
$L$ is metabelian. Since there are sufficiently many of them, we can place at least $c$ 
of the same $T^{\varphi^{j_0}}$ for some $j_0$ right after $V'$ at the beginning, which gives 0.

Note that in the case where $f \in F\setminus Z$ we can consider the Frobenius 
group $ZH^f$ and conclude in a similar way that $$[[L,L]\cap V_f,\underbrace{u_a,\ldots,v_a}_v] = 0$$
for any $v$ elements $u_a,\ldots,v_a \in L_a$.

\end{proof}

\section{Proof of Theorem \ref{mainthe1}}

We will use the following results.

\begin{lemma}\label{copr nuc}
Let $G$ be a finite $p$-group admitting a nilpotent group of automorphisms 
$F$ such that $C_G(F)=1$. Let $F_{p'}$ be the Hall $p'$-subgroup of $F$. 
Then $ C_G(F_{p'})=1$.
\end{lemma}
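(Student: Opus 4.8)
\textbf{Plan of proof of Lemma \ref{copr nuc}.} The idea is to reduce the statement to the known coprime situation by peeling off the $p$-part of $F$ one layer at a time along a chief-type series, using the elementary fact that a fixed-point-free automorphism acting on a finite $p$-group cannot itself be an inner-like obstruction. First I would invoke Thompson's theorem to recall that $F$, being a nilpotent group of automorphisms with $C_G(F)=1$, decomposes as the direct product of its Sylow subgroups; write $F = F_p \times F_{p'}$ where $F_p$ is the Sylow $p$-subgroup and $F_{p'}$ is the Hall $p'$-subgroup (which is normal in $F$, indeed characteristic). The goal is to show $C_G(F_{p'})=1$, knowing only $C_G(F) = C_G(F_p)\cap C_G(F_{p'}) = 1$.

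The main step is to understand $C_G(F_{p'})$. Since $F_{p'}$ is characteristic in $F$, the subgroup $C_G(F_{p'})$ is $F$-invariant, hence $F_p$-invariant, and since $F_p$ is a $p$-group acting on the $p$-group $C_G(F_{p'})$, if $C_G(F_{p'})\neq 1$ then $C_{C_G(F_{p'})}(F_p)\neq 1$ by the classical fact that a nontrivial $p$-group acted on by a $p$-group has nontrivial fixed points (Glauberman / elementary orbit-counting, since $|C_G(F_{p'})|$ and each $F_p$-orbit size is a power of $p$, so the fixed-point set has size $\equiv |C_G(F_{p'})| \pmod p$, but this parity argument needs care — see below). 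But $C_{C_G(F_{p'})}(F_p) = C_G(F_{p'})\cap C_G(F_p) = C_G(\langle F_{p'},F_p\rangle) = C_G(F) = 1$, a contradiction. Hence $C_G(F_{p'})=1$.

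The subtlety I expect to be the real obstacle is justifying that the $p$-group $F_p$ has a nontrivial fixed point on the nontrivial $p$-group $C_G(F_{p'})$: the naive statement ``a $p$-group acting on a $p$-group has nontrivial fixed points'' is true, and follows from the orbit decomposition $|N| = |C_N(P)| + \sum (\text{orbit sizes divisible by }p)$ for a $p$-group $P$ acting on a finite $p$-group $N$, forcing $|C_N(P)|\equiv |N| \equiv 0\pmod p$ so $|C_N(P)|$ is a positive multiple of $p$ (in particular $C_N(P)\ne 1$) — but one must make sure the action here is by automorphisms of a group, which it is, since $F$ acts by automorphisms on $G$ and $C_G(F_{p'})$ is $F$-invariant; the orbit argument only uses the set-action, so this is fine. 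One should also record that $C_G(F_{p'})$ is genuinely a $p$-group (it is a subgroup of the $p$-group $G$) so that the congruence is available. Once this fixed-point fact is in hand, the contradiction $C_G(F)=1$ closes the argument with no further computation; the decomposition $F=F_p\times F_{p'}$ from Thompson's nilpotency is what makes $C_G(F)=C_G(F_p)\cap C_G(F_{p'})$ and $\langle F_p,F_{p'}\rangle = F$ both transparent.
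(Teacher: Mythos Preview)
Your argument is correct and is essentially the same as the paper's: both observe that $C_G(F_{p'})$ is $F$-invariant, that the $p$-group $F_p$ (equivalently $F/F_{p'}$) then acts on the $p$-group $C_G(F_{p'})$, and that a $p$-group acting on a nontrivial $p$-group must have a nontrivial fixed point, contradicting $C_G(F)=1$. One small correction: you do not need (and should not cite) Thompson's theorem here, since the nilpotency of $F$ is a hypothesis, and the decomposition $F=F_p\times F_{p'}$ follows directly from the standard fact that a finite nilpotent group is the direct product of its Sylow subgroups.
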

\begin{proof}
The subgroup $C_G(F_{p'})$ is $F$-invariant, and so, it admits the natural action by the 
$p$-group $F/F_{p'}$. Since a finite $p$-group cannot act without nontrivial fixed points 
on another $p$-group, we must have $C_G(F_{p'})=1$.
\end{proof}

\begin{lemma}\cite[Lemma 2.2]{KhuMakShu}\label{Ker}
Let $G$ be a finite group admitting a nilpotent group of automorphisms 
$F$ such that $C_G(F)=1$. If $N$ is an $F$-invariant normal subgroup 
of $G$, then $C_{G/N}(F)=1$.
\end{lemma}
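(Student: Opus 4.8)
The plan is to argue by induction on the pair $(|G|,|N|)$, ordered lexicographically, reducing step by step to a very special configuration by two standard moves. First, if $N$ is not minimal among the nontrivial $F$-invariant normal subgroups of $G$, I would pick an $F$-invariant subgroup $N_1$ of $N$ that is normal in $G$ with $1\ne N_1\ne N$; applying the inductive hypothesis to $(G,N_1)$ gives $C_{G/N_1}(F)=1$, and applying it to $(G/N_1,N/N_1)$ then gives $C_{G/N}(F)=1$. So I may assume $N$ is minimal. Second, replacing $N$ by the full preimage in $G$ of $C_{G/N}(F)$ and using the inductive hypothesis once more (that preimage being a proper subgroup of $G$ unless $C_{G/N}(F)=G/N$), I may assume $F$ acts trivially on $G/N$, i.e. $[G,F]\le N$. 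Now $[G,F]$ is an $F$-invariant normal subgroup of $G$ contained in $N$, so minimality gives $[G,F]=1$ or $[G,F]=N$; in the first case $F$ centralizes $G$ and then $G=C_G(F)=1$, so I may assume $[G,F]=N$.

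At this stage $N$ is a minimal normal subgroup of the semidirect product $\langle G,F\rangle$, hence either a direct product of isomorphic nonabelian simple groups or an elementary abelian $r$-group. The nonabelian alternative must be excluded: $F$ would then induce on $N$ a nilpotent group of automorphisms with no nontrivial fixed points, which is impossible because a non-solvable finite group admits no fixed-point-free nilpotent (indeed solvable) group of automorphisms. I expect this to be the real obstacle to a fully elementary proof, as it rests on deep structural results; I note, though, that in the intended application $G$ is already nilpotent (by Theorem \ref{auxthe}), so $N$ is automatically elementary abelian and the issue does not arise. In any case, from here on $N$ is an elementary abelian $r$-group.

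The remaining task is to handle the possible non-coprimality of the action of $F$ on $N$, and the idea is to peel off the $r$-part of $F$. Write $F=F_r\times F_{r'}$ with $F_r$ a Sylow $r$-subgroup and $F_{r'}$ a Hall $r'$-subgroup of $F$. First, $C_N(F_{r'})=1$: this subgroup is $F_r$-invariant, and were it nontrivial the $r$-group $F_r$ would have a nontrivial fixed point on it, contradicting $C_N(F_r)\cap C_N(F_{r'})=C_N(F)=1$. Next, for each $g\in G$ the map $f'\mapsto[g,f']$ takes values in $N$ (since $[G,F_{r'}]\le[G,F]=N$) and is a $1$-cocycle for the action of $F_{r'}$ on the abelian group $N$; as $|F_{r'}|$ and $|N|$ are coprime it is a coboundary, so $g\in C_G(F_{r'})N$ for every $g$. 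Hence $G=C_G(F_{r'})N$; putting $G_2=C_G(F_{r'})$ and noting $G_2\cap N=C_N(F_{r'})=1$, we see that $G$ is the semidirect product of $N$ by $G_2$. Finally $F_r$ commutes with $F_{r'}$, so it normalizes $G_2$ as well as $N$; therefore the projection $\pi:G\to G_2$ with kernel $N$ is $F_r$-equivariant, and so $[G_2,F_r]=\pi([G,F_r])\subseteq\pi([G,F])=\pi(N)=1$. Thus $G_2\le C_G(F_r)\cap C_G(F_{r'})=C_G(F)=1$, whence $G=N$ and $C_{G/N}(F)=1$, closing the induction.
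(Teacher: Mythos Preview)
The paper does not prove this lemma at all; it is simply quoted from \cite[Lemma~2.2]{KhuMakShu}. So there is nothing to compare against, and the relevant question is whether your argument stands on its own. In outline it does: the two reductions (to $N$ minimal, and to $[G,F]\le N$) are sound, and the final coprime--cohomology step splitting off $F_{r'}$ and showing $G_2=C_G(F_{r'})=1$ is clean and correct. Two remarks, one cosmetic and one substantive.

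First, in your second reduction you write ``replacing $N$ by the full preimage in $G$ of $C_{G/N}(F)$''. You mean replacing $G$ by that preimage $M$: you then apply the inductive hypothesis to the pair $(M,N)$ with $|M|<|G|$, obtaining $C_{M/N}(F)=1$; since $M/N=C_{G/N}(F)$ is fixed pointwise by $F$, this forces $M/N=1$. (Replacing $N$ by $M$ would not work, since $C_{G/N}(F)$ need not be normal in $G/N$, so $M$ need not be normal in $G$.) Your parenthetical makes clear this is what you intend; just fix the wording.

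Second, your parenthetical ``(indeed solvable)'' is false as stated: a non-solvable group \emph{can} admit a fixed-point-free solvable group of automorphisms. For instance, $S_4\le S_5=\mathrm{Aut}(A_5)$ acts on $A_5$ by conjugation with $C_{A_5}(S_4)=1$, yet $S_4$ is solvable and $A_5$ is not. What you actually need---that a finite group admitting a \emph{nilpotent} group of automorphisms with trivial fixed points is solvable---is true, but it genuinely requires the classification of finite simple groups (one reduces, via the diagonal trick you implicitly know, to a nilpotent $\bar F\le\mathrm{Aut}(S)$ with $C_S(\bar F)=1$ for $S$ nonabelian simple, and rules this out using CFSG). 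You are right that this is the only non-elementary input and that it is not needed for the application in the present paper, where $G$ is already nilpotent by Theorem~\ref{auxthe}.
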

\begin{lemma} \cite[Theorem 2.3\label{Com}]{KhuMakShu} 
Suppose that a finite group $G$ admits a Frobenius group of 
automorphisms $FH$ with kernel $F$ and complement $H$. 
If $N$ is an $FH$-invariant normal subgroup of $G$ such that $C_N(F)=1$, then 
$C_{G/N}(H)=C_G(H)N/N$. 
\end{lemma}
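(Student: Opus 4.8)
The inclusion $C_G(H)N/N\subseteq C_{G/N}(H)$ is clear, since the image in $G/N$ of an $H$-fixed element is again $H$-fixed. The content lies in the reverse inclusion, i.e.\ in showing that each $\bar x=xN\in C_{G/N}(H)$ lifts to an element of $C_G(H)$. By hypothesis $x^{-1}x^{h}\in N$ for every $h\in H$; setting $c_h=x^{-1}x^{h}$ one checks that $c$ satisfies $c_{h_1h_2}=c_{h_2}(c_{h_1})^{h_2}$, and that $\bar x$ admits an $H$-fixed lift precisely when $c_h=n(n^{h})^{-1}$ for a single $n\in N$. Thus the assertion is a fixed-point-lifting statement, and everything reduces to trivializing the cocycle $c$, using $C_N(F)=1$.

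The first step is to reduce to the case in which $N$ is a chief factor, arguing by induction on $|N|$. Choose a subgroup $M$ with $1<M\le N$ that is minimal subject to being normal in $G$ and invariant under $FH$; such $M$ is a minimal normal subgroup of the semidirect product of $G$ by $FH$, hence a direct product of isomorphic simple groups. Since $F$ is nilpotent and $M$ is an $F$-invariant normal subgroup of $N$, Lemma \ref{Ker} gives $C_{N/M}(F)=1$, so the inductive hypothesis applies to $G/M$ with the subgroup $N/M$ and shows that every element of $C_{G/N}(H)$ is induced by an element of $C_{G/M}(H)$. Applying the statement to the pair $(G,M)$, where $M\le N$ and $C_M(F)=1$, gives $C_{G/M}(H)=C_G(H)M/M$; composing the two conclusions and using $M\le N$ yields $\bar x\in C_G(H)N/N$. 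Hence it suffices to treat the base case $N=M$. If $M$ is nonabelian it can be handled by the same coprime argument used below, so the essential case is $M=V$ elementary abelian, viewed as an $\mathbb{F}_p[FH]$-module with $C_V(F)=0$.

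In the module case I split on the prime $p$. If $p\nmid|H|$, then $H$ acts coprimely on $V$ and fixed points lift under coprime action, which settles this case at once. If instead $p\mid|H|$, then, $FH$ being a Frobenius group, the orders $|F|$ and $|H|$ are coprime, so $p\nmid|F|$ and $F$ acts both coprimely and fixed-point-freely on $V$. This is precisely the situation in which $H^1(H,V)$ need not vanish by itself, so the Frobenius kernel must be brought to bear, and it is here that I expect the main difficulty to lie.

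To deal with the remaining case I would exploit $C_V(F)=0$ together with the regular action of $H$ on $F\setminus\{1\}$ (each nontrivial $H$-orbit on $F$ has size $|H|$, since $C_F(h)=1$ for $h\neq1$). After replacing $G$ by $\langle x^{FH}\rangle V$, so that $G/V$ is generated by the $F$-orbit of $\bar x$, I would decompose $V=\sum_{f\in F}C_V(H^{f})$ via Lemma \ref{lemma dec Frb} and use the relations imposed on $c$ by the full $FH$-action on $x$ to write the class of $c$ as a sum of contributions attached to the conjugates $H^{f}$, cancelling them against one another along the regular $H$-orbits on $F$. This is the analogue for groups of the Vandermonde-type passage between eigenspaces and fixed-point subspaces carried out for Lie algebras in Section \ref{secgrLiealg}. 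The crux, and the step I expect to be the genuine obstacle, is to make this cancellation exact: to prove that the coprime fixed-point-free action of $F$ forces the $1$-cocycle $c$, which for an abstract $H$-module could well be cohomologically nontrivial, to be a coboundary once it is known to extend compatibly with the $FH$-action under the constraint $C_V(F)=0$.
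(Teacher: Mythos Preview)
The paper does not prove this lemma at all: it is quoted verbatim from \cite[Theorem~2.3]{KhuMakShu} and used as a black box in Section~4. So there is no ``paper's own proof'' to compare against; any comparison would have to be with the original argument in \cite{KhuMakShu}.

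That said, your proposal is not a complete proof. The inductive reduction to a minimal $FH$-invariant normal subgroup $M$ is correct, and the case $p\nmid|H|$ is indeed immediate by the standard coprime fixed-point lifting. But in the remaining case---$V$ elementary abelian of characteristic $p$ with $p\mid|H|$---you do not actually prove anything: you outline a strategy (decompose $V=\sum_{f}C_V(H^{f})$ and ``cancel along regular $H$-orbits on $F$'') and then explicitly flag the crux as ``the genuine obstacle'' without resolving it. As written this is a gap, not a proof. In particular, the cocycle $c_h=x^{-1}x^{h}$ depends only on the $H$-action on the single coset $xV$; it is not clear what ``relations imposed by the full $FH$-action on $x$'' you intend to exploit, since $F$ need not stabilise $xV$ and the elements $x^{f}$ lie in other cosets. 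The analogy you draw with the Vandermonde-type argument of Section~\ref{secgrLiealg} is suggestive but not a substitute for an argument: that passage concerns eigenspace decompositions inside a Lie algebra, whereas here you must show that a specific $1$-cocycle with values in an $\mathbb{F}_p[H]$-module is a coboundary, and $H^1(H,V)$ need not vanish for abstract $H$-modules.

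A smaller point: your dismissal of the nonabelian minimal case (``can be handled by the same coprime argument'') is too quick---nothing in the hypotheses forces $|M|$ and $|H|$ to be coprime when $M$ is a direct product of nonabelian simple groups. In the application made in this paper $G$ is already nilpotent, so the nonabelian case does not arise, but the lemma as stated is general. If you want a self-contained proof you should consult the original argument in \cite{KhuMakShu}; the mechanism there is what actually closes the case you left open.
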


We know that $G$ in Theorem \ref{mainthe1} is nilpotent. We wish to show that 
the nilpotency class of $G$ is $(c,|FH|)$-bounded. 
It is easy to see that without loss of generality we may assume that $G$ is a $p$-group. Moreover, 
by Lemma \ref{copr nuc} we also may assume that $(|G|,|F|)=1$.

Consider the associated Lie ring of the group 
$G$ $$L(G)=\bigoplus_{i=1}^{n}\gamma_i/\gamma_{i+1},$$ where $n$ is the nilpotency class 
of $G$ and $\gamma_i$ are the terms of the lower central series of $G$. The nilpotency class 
of $G$ coincides with the nilpotency class of $L(G)$. The action of the group $FH$ on $G$ 
induces naturally an action of $FH$ on $L(G)$. Since $F$ acts fixed-point-freely 
on every quotient $\gamma_i/\gamma_{i+1}$, it follows by Lemma \ref{Ker} that $C_{L(G)}(F)=0$. 
We observe that the subring $C_{L(G)}(H)$ is nilpotent of class at most $c$ by Lemma \ref{Com}. 
Theorem \ref{Liealgthe} now tells us that $L(G)$ is nilpotent 
of $(c,|FH|)$-bounded class. The proof is complete.

\end{document}